\newtheorem{theorem}{Theorem}[section]
\newtheorem{corollary}[theorem]{Corollary}
\newtheorem{lemma}[theorem]{Lemma}
\newtheorem{proposition}[theorem]{Proposition}
\theoremstyle{definition}
\newtheorem{definition}[theorem]{Definition}
\newtheorem{example}[theorem]{Example}
\newtheorem{remark}[theorem]{Remark}
\numberwithin{equation}{section}
\title[On Multifunctions Defined Implicitly By Set-Valued Inclusions]
{On Differential Properties of Multifunctions Defined Implicitly By Set-Valued Inclusions}
\author[A. Uderzo]{Amos Uderzo}
\address[A. Uderzo]{Dept. of Mathematics and Applications, University
of Milano - Bicocca, Milano, Italy}
\email{{\tt amos.uderzo@unimib.it}}
\keywords{Parameterized set-valued inclusion, implicit multifunction, Aubin continuity,
prederivative, graphical derivative, $C$-concavity, coderivative}
\subjclass[2010]{49J53, 49J52, 90C31}
\date{\today}
\newcommand{\Pb}{\mathbb P}
\newcommand{\R}{\mathbb R}
\newcommand{\N}{\mathbb N}
\newcommand{\X}{\mathbb X}
\newcommand{\Y}{\mathbb Y}
\newcommand{\Uball}{{\mathbb B}}
\newcommand{\Usfer}{{\mathbb S}}
\newcommand{\dom}{{\rm dom}\, }
\newcommand{\graph}{{\rm graph}\,}
\newcommand{\nullv}{\mathbf{0}}
\newcommand{\cl}{{\rm cl}\, }
\newcommand{\cone}{{\rm cone}\, }
\newcommand{\bd}{{\rm bd}\, }
\newcommand{\inte}{{\rm int}\, }
\newcommand{\stardif}{\hbox{${*\over {}}$}}
\newcommand{\SVI}{{\rm SVI}\,}
\newcommand{\Solv}{{\mathcal S}}
\newcommand{\Fder}{\widehat{\rm D}}
\newcommand{\Fsubd}{\widehat{\partial}}
\newcommand{\stsl}[1]{|\nabla #1|}
\newcommand{\pastsl}[1]{|\nabla_x #1|}
\newcommand{\ball}[2]{{\rm B}(#1, #2)}
\newcommand{\lip}[2]{{\rm lip}\, #1(#2)}
\newcommand{\dist}[2]{{\rm dist}\left(#1,#2\right)}
\newcommand{\exc}[2]{{\rm exc}(#1,#2)}
\newcommand{\Tang}[2]{{\rm T}(#1;#2)}
\newcommand{\Ncone}[2]{{\rm \widehat{N}}(#1;#2)}
\newcommand{\haus}[2]{{\rm haus}(#1,#2)}
\newcommand{\excFC}[2]{\nu_{F,C}(#1,#2)}
\newcommand{\core}[2]{|#1\stardif#2|}
\newcommand{\Gder}[2]{{\rm D}#1(#2)}
\newcommand{\Coder}[2]{{\rm \widehat{D}^*}#1(#2)}
\newcommand{\ind}[2]{\iota(#1;#2)}
\newcommand{\Preder}[3]{H_{#1}(#2;#3)}
\begin{document}

\begin{abstract}
In the present paper, several properties concerning generalized derivatives
of multifunctions implicitly defined by set-valued inclusions are studied
by techniques of variational analysis. Set-valued inclusions are problems formalizing
the robust fulfilment of cone constraint systems, whose data are affected by
a ``crude knowledge" of uncertain elements, so they can not be casted in traditional generalized
equations.

The focus of this study in on the first-order behaviour of the solution mapping
associated with a parameterized set-valued inclusion,
starting with Lipschitzian properties and then considering its graphical derivative.
In particular, a condition for the Aubin continuity of the solution mapping is
established in terms of outer prederivative of the set-valued mapping defining
the inclusion. A large class of parameterized set-valued inculsions is singled out,
whose solution mapping turns out to be convex. Some relevant consequences on the graphical derivative
are explored. In the absence of that, formulae for the inner and outer approximation
of the graphical derivative are provided by means of prederivatives of the
problem data. A representation useful to calculate the coderivative of the solution
mapping is also obtained via the subdifferential of a merit function.
\end{abstract}

\maketitle




\section{Introduction and problem statement}

The concept of implicit function has been devised to enable calculations
and, more generally, to deal with solutions of parameterized problems
that can not be explicitly solved. Historically, the study of conditions under which
a smooth equation system determines its variables as a function of
parameters, as well as the continuity and differentiability properties
of the function so defined, was been the theme of fruitful speculations
in classic analysis.
While the first implicit function theorem, as modernly meant, seems to be due
to Cauchy, as a matter of fact functions defined implicitly by equations
can be traced back to earlier works authored by the founding fathers
of differential calculus (for detailed historical remarks, see
\cite[Commentary to Chapter 1]{DonRoc14} and references therein).
When specific features of modern variational analysis, with the
acceptance of set-valued mappings as basic mathematical objects,
led to address more general class of problems, such as inequality
and cone constraint systems, variational inequalities and equilibrium
problems, similar questions have been posed with reference to the
multifunction counterpart of the original concept of implicit function. As a result,
a comprehensive theory of multifunctions implicitly defined by
generalized equations came up, which has been brought to a high level
of development in the last decades or so
(see, among other, \cite{BorZhu05,DonRoc14,DurStr12,GfrOut16,Ioff17,LedZhu99,LeTaYe08,
Mord94,Mord94b,Mord06,NgaThe04,NgTrTh13,Robi76,Robi79,Robi91,RocWet98,Schi07}).
Although generalized equations are a problem format able to subsume
the vast majority of mathematical conditions encountered in optimization
and variational analysis, the treatment of constraint systems arising in
robust optimization seems to be left out by such a formalism.
After the seminal paper \cite{BenNem98}, robust optimization considers
constraint systems of the form
$$
   f(x,\omega)\in C,
$$
for given $f:\R^n\times\R^k\rightrightarrows\R^m$ and $C\subset\R^m$,
where $x\in\R^n$ represents the decision vector whereas $\omega\in\R^k$
the data element of the problem. In many decision environments, described
and discussed by concrete examples in \cite{BenNem98}, while the knowledge
of the data may be partly or fully uncertain, reducing to the crude fact
that $\omega$ belongs to a given uncertain set $\Omega\subseteq\R^k$,
on the other hand the constraint system $f(x,\omega)\in C$ must be
satisfied independently of the actual realization of $\omega\in\Omega$.
This feature of the problem leads to the concept of robust feasibility,
formalized by the set-valued inclusion
\begin{equation}
   \Phi_f(x)=f(x,\Omega)=\{f(x,\omega)\ |\ \omega\in\Omega\}
   \subseteq C
\end{equation}
and to the related notion of robust optimal solution to uncertain
optimization problems. It is worth noting that the same problem format
arises when considering vector optimization problems, which are
characterized by a criterion function affected by uncertain data elements
(see \cite{KhTaZa15}).

In spite of the clear motivation and the urgent demand for skills on
the aforemention issue, to the best of the author's knowledge the
solution analysis of set-valued inclusions is still very little explored.
In fact, an error bound estimate was achieved in \cite{Cast99}, under
a $C$-concavity assumption, by techniques of convex analysis.
A different approach to error bounds and to solution existence is proposed
in \cite{Uder19}, which is based on the $C$-increase behaviour,
a sort of set-valued counterpart of the decrease principle (see \cite{BorZhu05}).
Conditions for solution existence, global error bounds and
characterizations of the contingent cone to the solution set are also
investigated in \cite{Uder20}, following the convex analysis
approach initiated in \cite{Cast99}. Besides, a perturbation analysis
of the solution set to parameterized set-valued inclusions has been
started in \cite{Uder20b}. More precisely,
given a set-valued mapping
$F:P\times X\rightrightarrows Y$ and a nonempty closed set $C\subset Y$,
the following set-valued inclusion problem is considered there:
find $x\in X$ such that
$$
  F(p,x)\subseteq C,   \leqno (\SVI_p)
$$
The above class of set-valued inclusions implicitly defines
the solution mapping $\Solv:P\rightrightarrows X$ as
$$
  \Solv(p)=\{x\in X\ |\ F(p,x)\subseteq C\}.
$$
The paper \cite{Uder20b} contains sufficient conditions for several
quantitative forms of semicontinuity of $\Solv$, including those known
as Lipschitz lower semicontinuity and calmness in the variational
analysis literature.

The present paper carries on this research line, focussing instead
on the Aubin and Lipschitz continuity of $\Solv$, as well as on
its first-order behaviour. In particular, a first attempt of studying
the graphical derivative of $\Solv$ is undertaken. In consideration of
the fact that, as recognized in \cite[Chapter 9]{RocWet98},
``{\it the notion of Lipschitz continuity [\dots] singles out a class of
functions which, although not necessarily differentiable, have a property
akin to differentiability in furnishing estimates of magnitudes, if
not the directions, of changes}" (the same could be repeated for
multifunctions), the subject of the investigations here reported
can be regarded as an introduction to the sensitivity analysis of
problems $(\SVI_p)$.

The contents of the paper are organized in the subsequent sections
according to the following outline.
In Section \ref{Sect:2} basic notions and tools needed for implementing
the study of the subject by a variational technique are recalled.
Since a part of the analysis refers to concepts that find in merely
metric spaces their natural setting, this section is arranged in
two subsections, presenting material in the absence or in the presence
of a vector structure. The two results furnished with a full proof
in this section capture the main ideas behind the approach of study proposed in
the paper.
In Section \ref{Sect:3} the metric space formulation of results
about the behaviour of the solution mapping to parameterized set-valued
inclusion is presented.
In particular, a condition for the Aubin continuity of this
set-valued mapping is established in terms of nondegeneracy
of the strong slope of a merit function. It is clear that
error bound estimates play a fundamental role here.
Section \ref{Sect:4} contains the main findings of the paper
and some discussion of them. A condition for the Aubin continuity
of implict multifunctions defined by $(\SVI_p)$ in normed vector spaces
is expressed in terms of problem data,
by means of the outer prederivatives, and some consequence for
its graphical derivative are discussed. The class of $C$-concave
parameterized set-valued inclusions is shown to exhibit a convex
solution mapping, which is thereby protodifferentiable, with a convex process
as a graphical derivative. In the absence of $C$-concavity,
some formulae for the inner and outer approximation
of the graphical derivative are presented. Elements for a
representation of the coderivative of the solution mapping
conclude this section.


\section{Preliminaries}    \label{Sect:2}

\subsection{Variational analysis tools in metric spaces}

Throughout the present subsection $(P,d)$, $(X,d)$ and $(Y,d)$
denote metric spaces.
Given a function $\varphi:X\longrightarrow\R\cup\{\pm\infty\}$ and $\alpha\in\R\cup\{\pm\infty\}$,
define $[\varphi\le\alpha]=\varphi^{-1}([-\infty,\alpha])$, $[\varphi>\alpha]=\varphi^{-1}
((\alpha,+\infty])$ and $[\varphi=\alpha]=\varphi^{-1}(\alpha)$. The symbol
$\dom\varphi=\varphi^{-1}(\R)$ stands for the domain of $\varphi$.
Given $x\in X$ and $r\ge 0$,
the closed ball centered at $x$ with radius $r$ is denoted by $\ball{x}{r}=
[d(\cdot,x)\le r]$. If $S\subseteq X$, define $\dist{x}{S}=\inf_{z\in S}d(x,z)$
and $\ball{S}{r}=[\dist{\cdot}{S}\le r]$. The symbol $\ind{\,\cdot\,}{S}:
X\longrightarrow\{0,+\infty\}$ denotes the indicator function of the set $S$.
Given two subsets $A,\, B\subseteq X$, the excess
of $A$ over $B$ is indicated by $\exc{A}{B}=\sup_{a\in A}\dist{a}{B}$,
whereas the Pompeiu-Hausdorff distance between $A$ and $B$ by $\haus{A}{B}=
\max\{\exc{A}{B},\exc{B}{A}\}$. The topological closure, the interior and the boundary
of a set $S\subseteq X$ are denoted by $\cl S$, $\inte S$, and $\bd S$, respectively.
Given a set-valued mapping $\Phi:X\rightrightarrows Y$, $\dom\Phi=\{x\in X\
|\ \Phi(x)\ne\varnothing\}$ and $\graph\Phi=\{(x,y)\in X\times Y\ |\ y\in\Phi(x)\}$
stand for the domain and the graph of $\Phi$, respectively.
Given $C\subseteq Y$, the upper inverse image of $C$ through $\Phi$
is indicated by $\Phi^{+1}(C)=\{x\in X\ |\ \Phi(x)\subseteq C\}$.
The acronyms l.s.c. and u.s.c. stand for lower and upper semicontinuous,
respectively.

Given a set-valued mapping $\Phi:X\rightrightarrows Y$
and a closed set $C\subset Y$, the solution set of the set-valued inclusion
$$
  \Phi(x)\subseteq C,   \leqno (\SVI)
$$
namely the set $\Phi^{+1}(C)$, can be conveniently reformulated via level/sublevel
sets of the merit function $\nu_{\Phi,C}:X\longrightarrow\R\cup
\{\pm\infty\}$, defined through the excess as being
\begin{equation}     \label{eq:defnuPhi}
  \nu_{\Phi,C}(x)=\exc{\Phi(x)}{C}=\sup_{y\in\Phi(x)}\dist{y}{C}.
\end{equation}
To this aim, observe that, if $\dom\Phi=X$, then it is $[\nu_{\Phi,C}\ge 0]=X$.
Thus, in this case the following equality holds
$$
   \Phi^{+1}(C)=[\nu_{\Phi,C}=0].
$$
More in general, in the case $X\backslash\dom\Phi\ne\varnothing$, if
accepting the usual convention $\sup\varnothing=-\infty$, then one has
$X\backslash\dom\Phi=[\nu_{\Phi,C}=-\infty]$ and hence
$$
   \Phi^{+1}(C)=[\nu_{\Phi,C}\le 0].
$$
Since the elements of $X\backslash\dom\Phi$ are trivial solutions
of $(\SVI)$, the equality $\dom\Phi=X$ will be maintained as a
standing assumption in the rest of the paper.

Besides, it is useful to note that, whenever $\Phi$ takes bounded
values, one has $[\nu_{\Phi,C}<+\infty]=X$.

\begin{remark}   \label{rem:lscnu}
As one expects, the function $\nu_{\Phi,C}$ defined through $(\ref{eq:defnuPhi})$
inherits various properties from $\Phi$. For the purposes of the present
investigations, it is useful to recall that if $\Phi:P\rightrightarrows X$
is l.s.c. at $x_0\in X$, then $\nu_{\Phi,C}$ is l.s.c. at the same point.
If $\Phi$ is Hausdorff $C$-u.s.c. (in particular, u.s.c.) at $x_0$, then
$\nu_{\Phi,C}$ is u.s.c. at the same point
(see \cite[Lemma 2.3]{Uder19}).
Notice that, whenever $\Phi:P\rightrightarrows X$ is l.s.c. on $X$, then
$\Phi^{+1}(C)$ is a closed (possibly empty) subset of $X$.
\end{remark}

The concept of Lipschitz continuity can be adapted in different ways if
referred to set-valued mappings. In the context of mappings with bounded
values, it seems to be natural to extend immediately the notion valid for functions
via the Hausdorff distance. Accordingly, a set-valued mapping
$\Phi:P\rightrightarrows X$ between metric spaces is said to be Lipschitz
continuous with rate $\kappa>0$ in a subset $S\subseteq P$ if
$$
  \haus{\Phi(p_1)}{\Phi(p_2)}\le\kappa d(p_1,p_2),\quad\forall p_1,\, p_2
  \in S.
$$
In more general contexts of interest to variational analysis, a more
general notion gained a wide attention, inasmuch as it revealed to be
intertwined with profound phenomena of regularity.
This notion\footnote{Introduced under the name of ``pseudo-Lipschitz"
property in \cite{Aubi84}, later on it became popular as Lipschitz-likeness
or Aubin property/continuity.}, playing a crucial role in the present paper,
is recalled below.

\begin{definition}[Aubin continuity]
A set-valued mapping $\Phi:P\rightrightarrows X$ between metric spaces
is said to be {\it Aubin continuous} at $(\bar p,\bar x)\in\graph\Phi$
with rate $\kappa>0$ if there exist positive $\delta$ and $r$ such that
\begin{equation}     \label{in:Aubcont}
  \dist{x}{\Phi(p_1)}\le\kappa d(p_1,p_2),\quad\forall p_1,\, p_2\in
  \ball{\bar p}{\delta},\ \forall x\in \Phi(p_2)\cap\ball{\bar x}{r}.
\end{equation}
The value
\begin{equation}
   \lip{\Phi}{\bar p,\bar x}=\inf\{\kappa>0\ |\ \exists\delta,\, r>0\
   \hbox{ such that inequality } (\ref{in:Aubcont}) \hbox{ holds}\,\}
\end{equation}
is called {\it modulus of Aubin continuity} of $\Phi$ at $(\bar p,\bar x)$.
\end{definition}

Another Lipschitzian property for set-valued mappings, which is worth
being mentioned in connection with the subject of the present investigations,
is calmness. The behaviour that it postulates can be obtained from
condition $(\ref{in:Aubcont})$, by fixing $p_1=\bar p$, so it
results in a property weaker than Aubin continuity.

From inclusion $(\ref{in:Aubcont})$, by taking $p_2=\bar p$ and $p_1=p
\in\ball{\bar p}{\delta}$, one gets
$$
  \dist{x}{\Phi(p)}\le\kappa d(p,\bar p),\quad\forall p\in
  \ball{\bar p}{\delta},\ \forall x\in \Phi(\bar p)\cap\ball{\bar x}{r},
$$
which implies, in particular, the existence of $\ell>0$ such that
\begin{equation}     \label{in:Liplowsemicont}
   \Phi(p)\cap\ball{\bar x}{\ell d(p,\bar p)}\ne\varnothing,
   \quad\forall p\in\ball{\bar p}{\delta}.
\end{equation}
The behaviour of $\Phi$ obtained in $(\ref{in:Liplowsemicont})$ as
a further consequence of the Aubin continuity of $\Phi$, which can
be regarded as a local version of inner semicontinuity, is called
Lipschitz lower semicontinuity in \cite{KlaKum02}.

A standard technique for establishing solution existence (solvability)
and estimates of the distance from the solution set (error bounds)
to inequalities in metric spaces relies on a quantitative employment
of metric completeness via the Ekeland variational principle,
which enables to replace iteration schemes.
Such a technique can be fruitfully implemented by means of the notion of
strong slope of a function $\varphi:X\longrightarrow\R\cup\{\pm\infty\}$
at a point $x_0$ in a metric space $X$, defined as
\begin{eqnarray*}
  \stsl{\varphi}(x_0)=\left\{\begin{array}{ll}
  0, & \hbox{ if $x_0$ is a local minimizer of $\varphi$}, \\
  \displaystyle\limsup_{x\to x_0}{\varphi(x_0)-\varphi(x)\over d(x,x_0)}, &
  \hbox{ otherwise.}
  \end{array}
  \right.
\end{eqnarray*}
After \cite{AzCoLu02,Ioff00}, the usage of this tool has become
standard in variational analysis. It is well know that, whenever
$X$ is a normed vector space $(\X,\|\cdot\|)$ and $\varphi$ is
Fr\'echet differentiable at $x_0\in\dom\varphi$, with derivative
$\Fder\varphi(x_0)\in\X^*$, then it holds $\stsl{\varphi}(x_0)
=\|\Fder\varphi(x_0)\|$, while if $\varphi$ is convex on $\X$ and continuous
at $x_0$, then $\stsl{\varphi}(x_0)=\dist{\nullv^*}{\partial\varphi(x_0)}$,
where $\partial\varphi(x_0)$ denotes the subdifferential of $\varphi$
at $x_0$ is the sense of convex analysis.

In what follows, to deal with set-valued mappings depending on a
parameter, a partial variant of the strong slope with respect to
the variable $x$ of a function $\varphi:P\times X\longrightarrow\R
\cup\{\pm\infty\}$, defined on the product of metric spaces, at
$(p_0,x_0)\in P\times X$, will be considered, which is defined as
\begin{eqnarray*}
  \pastsl{\varphi}(p_0,x_0)=\left\{\begin{array}{ll}
  0, & \hbox{ if $(p_0,x_0)$ is a local minimizer of $\varphi$}, \\
  \displaystyle\limsup_{x\to x_0}{\varphi(p_0,x_0)-\varphi(p_0,x)\over d(x,x_0)}, &
  \hbox{ otherwise.}
  \end{array}
  \right.
\end{eqnarray*}

A sufficient condition for the behaviour of the solution set to parameterized
inequalities, able to trigger the forthcoming analysis, can be expressed in terms of
partial strong slope. The next lemma, whose role is fundamental according
to the approach here followed, extends to a metric space setting an analogous tool
of analysis valid in more structured contexts (see \cite[Theorem 3.6.3]{BorZhu05}).

\begin{lemma}[Parametric basic lemma]   \label{lem:parbaslem}
Let $P$, $X$ and $Y$ be metric spaces
and let $(\bar p,\bar x)\in P\times X$. Suppose that $X$ and a function
$\nu:P\times X\longrightarrow[0,+\infty]$ satisfy the following conditions:

(i) $(X,d)$ is metrically complete;

(ii) $\nu(\bar p,\bar x)=0$;

(iii) the function $p\mapsto \nu(p,\bar x)$ is u.s.c. at $\bar p$;

(iv) there exists $\delta_1>0$ such that, for every $p\in
\ball{\bar p}{\delta_1}$, each function $x\mapsto \nu(p,x)$ is l.s.c.
on $X$;

(v) there exists $\delta_2>0$ such that
$$
   \sigma=\inf\{\pastsl{\nu}(p,x)\ |\ (p,x)\in [\ball{\bar p}{\delta_2}
   \times \ball{\bar x}{\delta_2}]\cap [\nu>0]\}>0.
$$

\noindent Then, there exist positive $\eta$ and $\zeta$ such that

(t) $[\nu(p,\cdot)=0]\cap\ball{\bar x}{\eta}\ne\varnothing$, for every
$p\in\ball{\bar p}{\zeta}$;

(tt) the following estimate holds
\begin{equation}    \label{in:parerbo}
    \dist{x}{[\nu(p,\cdot)=0]}\le{\nu(p,x)\over \sigma},\quad\forall
    (p,x)\in\ball{\bar p}{\zeta}\times \ball{\bar x}{\eta}.
\end{equation}
\end{lemma}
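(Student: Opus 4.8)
The plan is to use the Ekeland variational principle to produce, for each parameter $p$ near $\bar p$, a point where the function $\nu(p,\cdot)$ vanishes, and simultaneously to extract the error bound estimate $(\ref{in:parerbo})$ from the slope lower bound in hypothesis (v). The starting observation is that $\sigma>0$ means $\nu(p,\cdot)$ decreases at a controlled rate wherever it is strictly positive, so a minimizing argument cannot stall before reaching the zero set; completeness of $X$ (hypothesis (i)) guarantees the limiting point exists.

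First I would fix $p\in\ball{\bar p}{\zeta}$ and a point $x\in\ball{\bar x}{\eta}$ with $\nu(p,x)>0$ (the case $\nu(p,x)=0$ being trivial), where $\eta,\zeta$ are to be chosen small enough that the relevant points stay inside $\ball{\bar p}{\delta_2}\times\ball{\bar x}{\delta_2}$ and inside the region of radius $\delta_1$ where $\nu(p,\cdot)$ is l.s.c. (hypotheses (iv) and (v)). I apply the Ekeland principle to the l.s.c. nonnegative function $\nu(p,\cdot)$ on the complete space $X$, centered at $x$, with an Ekeland parameter $\lambda$ chosen just below $\sigma$ (say $\lambda\in(\nu(p,x)/\sigma\cdot\text{(something)},\sigma)$ — the precise choice is a routine optimization). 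This yields an Ekeland point $\hat x$ satisfying $\nu(p,\hat x)\le\nu(p,x)$, a distance bound $d(\hat x,x)\le\nu(p,x)/\lambda$, and the strict-minimality inequality $\nu(p,z)\ge\nu(p,\hat x)-\lambda d(z,\hat x)$ for all $z$. The key step is to show $\nu(p,\hat x)=0$: if instead $\nu(p,\hat x)>0$, then $\hat x\in[\nu>0]$ and the Ekeland inequality forces $\pastsl{\nu}(p,\hat x)\le\lambda<\sigma$, contradicting the definition of $\sigma$ in (v). Hence $\hat x\in[\nu(p,\cdot)=0]$, and the distance bound gives $\dist{x}{[\nu(p,\cdot)=0]}\le d(x,\hat x)\le\nu(p,x)/\lambda$; letting $\lambda\uparrow\sigma$ produces $(\ref{in:parerbo})$.

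To secure conclusion (t) and the fact that $\hat x$ lands inside $\ball{\bar x}{\eta}$ as required, I would use hypotheses (ii) and (iii): since $\nu(\bar p,\bar x)=0$ and $p\mapsto\nu(p,\bar x)$ is u.s.c. at $\bar p$, shrinking $\zeta$ makes $\nu(p,\bar x)$ as small as desired for $p\in\ball{\bar p}{\zeta}$. Taking $x=\bar x$ in the construction above then gives a zero $\hat x$ of $\nu(p,\cdot)$ with $d(\hat x,\bar x)\le\nu(p,\bar x)/\lambda$, which can be made $\le\eta$ by choosing $\zeta$ small enough; this proves (t) and simultaneously certifies that the Ekeland points do not escape the ball where the slope estimate is valid.

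\textbf{The main obstacle} I expect is the bookkeeping of the radii: I must choose $\eta$ and $\zeta$ so that every point produced by the Ekeland argument — both the center $x$ and the resulting $\hat x$, together with the nearby points $z$ at which the slope is evaluated — remains inside the region $\ball{\bar p}{\delta_2}\times\ball{\bar x}{\delta_2}$ where (v) applies, and inside the $\delta_1$-ball where (iv) guarantees lower semicontinuity. Concretely, since the displacement $d(\hat x,x)$ is bounded by $\nu(p,x)/\lambda$ and $\nu(p,x)$ is itself controlled near $\bar x$ by continuity-type estimates, one must calibrate $\eta,\zeta$ against $\delta_1,\delta_2$ and $\sigma$; this is a standard but delicate interleaving of the small parameters, and it is where the hypotheses must be invoked in exactly the right order.
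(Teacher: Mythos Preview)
Your approach is essentially the paper's: Ekeland's principle applied to $\nu(p,\cdot)$, the slope bound forcing the Ekeland point into the zero level set, and the upper semicontinuity hypothesis (iii) used at $x=\bar x$ to secure conclusion (t). The structure and the key idea match.

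There is, however, one concrete misstep in your handling of the obstacle you yourself flag. You write that ``$\nu(p,x)$ is itself controlled near $\bar x$ by continuity-type estimates'', and this is how you intend to keep the Ekeland point $\hat x$ inside $\ball{\bar x}{\delta_2}$. But the hypotheses give no such control: (iii) bounds $\nu(p,\bar x)$ from above only at the single point $\bar x$, and (iv) gives only lower semicontinuity in $x$. So $\nu(p,x)$ may be arbitrarily large for $x\in\ball{\bar x}{\eta}$, and then your displacement bound $d(\hat x,x)\le\nu(p,x)/\lambda$ does not confine $\hat x$ to the ball where (v) applies. The paper resolves this not by bounding $\nu(p,x)$ but by a case split: if $\nu(p,x)/\widetilde\sigma\ge 2\eta$ (with $\eta=\delta_2/3$), then the error bound $(\ref{in:parerbo})$ holds trivially, because by part (t) there is already a zero of $\nu(p,\cdot)$ within $\eta$ of $\bar x$, hence within $2\eta$ of $x$. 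Only in the complementary case $\nu(p,x)/\widetilde\sigma<2\eta$ is Ekeland invoked, and then the displacement is automatically at most $2\eta$, so $d(\hat x,\bar x)\le 2\eta+\eta=3\eta=\delta_2$ and hypothesis (v) applies. This dichotomy, not continuity, is what makes the bookkeeping close.
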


\begin{proof}
(t) Take an arbitrary $\widetilde{\sigma}\in (0,\sigma)$. As it is
$\nu(\bar p,\bar x)=0$, then according to hypothesis (iii) there
exists $0<\delta_3<\min\{\delta_1,\, \delta_2\}$ such that
\begin{equation}   \label{in:sigmadelta3}
  \nu(p,\bar x)<{\widetilde{\sigma}\delta_2\over 3},\quad\forall
  p\in\ball{\bar p}{\delta_3}.
\end{equation}
Set $\zeta=\delta_3$ and fix an arbitrary $p\in\ball{\bar p}{\zeta}$. Then
consider the corresponding function $\nu(p,\cdot):X\longrightarrow [0,+\infty]$.
Since it is $\zeta<\delta_1$, by hypothesis (iv) $\nu(p,\cdot)$ is
l.s.c. on $X$ (and bounded from below).
Moreover, because of inequality $(\ref{in:sigmadelta3})$, clearly it is
$$
 \nu(p,\bar x)\le\inf_{x\in X}\nu(p,x)+{\widetilde{\sigma}\delta_2\over 3}.
$$
Thus, by the Ekeland variational principle, which can be invoked
owing to hypothesis (i), there exists $x_p\in X$ such that
$$
   \nu(p,x_p)\le\nu(p,\bar x)<{\widetilde{\sigma}\delta_2\over 3},
$$
\begin{equation}   \label{in:xpxdist}
  d(x_p,\bar x)\le {\delta_2\over 3},
\end{equation}
and
$$
  \nu(p,x_p)<\nu(p,x)+\widetilde{\sigma} d(x,x_p),\quad\forall
  x\in X\backslash\{x_p\},
$$
whence one readily obtains
$$
  \pastsl{\nu}(p,x_p)=\max\left\{\limsup_{x\to x_p}{\nu(p,x_p)-\nu(p,x)
  \over d(x,x_p)}, \, 0\right\}\le \widetilde{\sigma}<\sigma.
$$
Notice that, as it is $\zeta<\delta_2$, it is true that
$(p,x_p)\in [\ball{\bar p}{\delta_2}\times \ball{\bar x}{\delta_2}]$.
This fact entails that $\nu(p,x_p)=0$ for, if it were $\nu(p,x_p)>0$,
one would find contradicted the hypothesis (v).
So, one is forced to admit that $\nu(p,x_p)=0$.
Therefore, taking into account inequality $(\ref{in:xpxdist})$,
it suffices to set $\eta=\delta_2/3$ in order to get
\begin{equation}     \label{cap:soldisteta}
   x_p\in [\nu(p,\cdot)=0]\cap\ball{\bar x}{\eta}\ne\varnothing.
\end{equation}
By the arbitrariness of $p\in\ball{\bar p}{\zeta}$, the above argument
proves the assertion (t).

\vskip.5cm

\noindent (tt) Fix $(p,x)\in \ball{\bar p}{\zeta}\times\ball{\bar x}{\eta}
\cap [\nu>0]$, where $\zeta$ and $\eta$ are as in the proof of (t),
and set $r_{p,x}=\nu(p,x)/\widetilde{\sigma}$, with $\widetilde{\sigma}
\in (0,\sigma)$.

Let us consider first the case $r_{p,x}\ge 2\eta$. In such an event,
since as a consequence of $(\ref{cap:soldisteta})$ it holds
$$
  \dist{x}{[\nu(p,\cdot)=0]}\le d(x,\bar x)+\dist{\bar x}{[\nu(p,\cdot)=0]}
  \le 2\eta,
$$
then inequality $(\ref{in:parerbo})$ is immediately proved.

Let us consider now the case $r_{p,x}<2\eta$. Take a positive $\widetilde{r}$
in such a way that $r_{p,x}<\widetilde{r}<2\eta$.
Since it is $\nu(p,x)<\widetilde{r}\widetilde{\sigma}$ and $\nu(p,x)
\le\inf_{z\in X}\nu(p,z)+\widetilde{r}\widetilde{\sigma}$, one can
employ the same argument as for the proof of the assertion (t),
thus getting $\widetilde{x_p}\in X$ such that
\begin{equation}    \label{in:tilder}
   d(\widetilde{x_p},x)\le\widetilde{r}
\end{equation}
and
\begin{equation}     \label{in:stsltildexp}
   \pastsl{\nu}(p,\widetilde{x_p})\le\widetilde{\sigma}<\sigma.
\end{equation}
Since on account of inequality $(\ref{in:tilder})$ it holds
$$
   d(\widetilde{x_p},\bar x)\le d(\widetilde{x_p},x)+d(x,\bar x)
   \le\widetilde{r}+\eta\le 3\eta=\delta_2,
$$
the only way to avoid a contradiction following from inequality
$(\ref{in:stsltildexp})$ is to admit that $\widetilde{x_p}\in
[\nu(p,\cdot)=0]$. Consequently, it results in
$$
  \dist{x}{[\nu(p,\cdot)=0]}\le d(x,\widetilde{x_p})\le
  \widetilde{r}.
$$
As the argument leading to the last inequality works for every
$\widetilde{r}\in (r_{p,x},2\eta)$, one can deduce that
$$
  \dist{x}{[\nu(p,\cdot)=0]}\le r_{p,x}
$$
and hence, by arbitrariness of $\widetilde{\sigma}\in (0,\sigma)$,
one can achieve the inequality $(\ref{in:parerbo})$.
This completes the proof.
\end{proof}


\subsection{Variational analysis tools in normed vector spaces}

Throughout the current subsection, $(\X,\|\cdot\|)$ and $(\Y,\|\cdot\|)$
denote normed vector spaces. The null vector in a normed vector space
is indicated by $\nullv$. Define $\Uball=\ball{\nullv}{1}$ and
$\Usfer=\bd\Uball$. Given a set $S\subseteq\X$, $\cone S$ stands for
the conic hull of $S$. The (topological) dual space of $\X$ is denoted
by $\X^*$ and its null element by $\nullv^*$, while the bilinear form
defining the duality pairing between normed vector spaces is indicated
by $\langle\cdot,\cdot\rangle$. The acronym p.h. stands for positively
homogeneous.

The next remark collects some properties of the excess over a cone, which
may occur in a vector space setting, in view of a subsequent employment
through the function $\nu_{\Phi,C}$.

\begin{remark}[Excess over a cone]  \label{rem:provectexc}
Let $C\subseteq\Y$ be a closed, convex cone.

(i) For any $A,\, B\subseteq\Y$ and $t\in (0,+\infty)$,
as a straightforward consequence of the sublinearity of the function
$y\mapsto\dist{y}{C}$, one has
$\exc{A+B}{C}\le \exc{A}{C}+\exc{B}{C}$ and $\exc{tA}{C}\le t\exc{A}{C}$.

(ii) For any $A\subseteq\Y$ it holds $\exc{A+C}{C}=\exc{A}{C}$
(see \cite[Remark 2.1(iv)]{Uder19}).

(iii) Since it is $\dist{y}{C}\le\|y||$ for every $y\in\Y$, then
given any $r>0$, it holds $\exc{r\Uball}{C}\le r$.

(iv) For any $y\in\Y\backslash C$ and $r>0$, it holds
$\dist{y+r\Uball}{C}=\dist{y}{C}+r$ (see \cite[Lemma 2.1]{Uder19}).

(v) Let $S\subseteq\Y$ be such that $S\not\subseteq C$. Then,
for every $r>0$, it holds
$$
  \exc{S+r\Uball}{C}=\sup_{y\in S}\dist{y+r\Uball}{C}=
  \sup_{y\in S\backslash C}[\dist{y}{C}+r]=\exc{S}{C}+r.
$$

(vi) It is easy to see that for any $A,\, B\subseteq\Y$,
it holds $\exc{A}{C}\le \exc{A}{B}+\exc{B}{C}$.
\end{remark}

Given two nonempty subsets $K,\, S\subseteq\Y$, their $\ast$-difference
(a.k.a. Pontryagin difference) is defined as
$$
  K\stardif S=\{y\in\Y\ |\ y+S\subseteq K\}.
$$
It is readily seen that $\nullv\in K\stardif S$ iff
$S\subseteq K$. In what follows, several conditions will be expressed
in terms of the following quantity
\begin{equation}     \label{eq:defcore}
   \core{K}{S}=\sup\{r>0\ |\ r\Uball\subseteq K\stardif S\},
\end{equation}
which can be regarded as a measure of how much the set $S$ is inner to $K$
(for more details on the  $\ast$-difference, see for instance \cite{RubVla00}).

Given a set-valued mapping $\Phi:\X\rightrightarrows\Y$ between
normed vector spaces, several notions of first-order approximations
of $\Phi$ can be found in variational analysis, which reveal to be
suitable in connection with the present approach of study. Let
$x_0\in\dom\Phi$. After \cite{Ioff81}, a p.h. set-valued mapping
$\Preder{\Phi}{x_0}{\cdot}:\X\rightrightarrows\Y$ is said to be an outer prederivative
of $\Phi$ at $x_0$ if for every $\epsilon>0$  there exists $\delta>0$
such that
$$
  \Phi(x)\subseteq \Phi(x_0)+\Preder{\Phi}{x_0}{x-x_0}+\epsilon\|x-x_0\|\Uball,
  \quad\forall x\in\ball{x_0}{\delta}.
$$
In contrast with \cite{Pang11}, a p.h. set-valued mapping
$\Preder{\Phi}{x_0}{\cdot}:\X\rightrightarrows\Y$
is said to be an outer prederivative of $\Phi$ at $x_0$ if for every $\epsilon>0$
there exists $\delta>0$ such that
$$
  \Phi(x_0)+\Preder{\Phi}{x_0}{x-x_0}\subseteq\Phi(x)+\epsilon\|x-x_0\|\Uball,
  \quad\forall x\in\ball{x_0}{\delta}.
$$
For expanding the discussion about prederivatives, the reader may refer to
\cite{Ioff81,Pang11}.

Graphical differentiation represents a different way of approximating set-valued
mappings. It is based on the notion of conical approximation of sets.
Given a nonempty set $S\subseteq\Y$ and $y\in S$, let $\Tang{S}{y}$ denote,
in particular, the contingent cone to $S$ at $y$. Recall that $\Tang{S}{y}$
is always a closed cone and, whenever $S$ is convex, $\Tang{S}{y}$ too is convex
and can be represented as
\begin{equation}    \label{eq:Tangrepconvex}
  \Tang{S}{y}=\cl[\cone(S-y)]
\end{equation}
(see, for instance, \cite[Proposition 11.1.2(d)]{Schi07}). Besides, in view of
the technique of proof employed in a subsequent section, the following variational
characterization of the contingent cone to any set $S$ at $y$ will be helpful
\begin{equation}    \label{eq:Tangchar}
  \Tang{S}{y}=\left\{v\in\Y\ |\ \liminf_{t\downarrow 0}
  {\dist{y+tv}{S}\over t}=0\right\}
\end{equation}
(see \cite[Proposition 11.1.5]{Schi07}).

The graphical (contingent)
derivative of $\Phi$ at $(x_0,y_0)\in\graph\Phi$ is the set-valued mapping
$\Gder{\Phi}{x_0,y_0}:\X\rightrightarrows\Y$ defined via the graphical relation
$$
  \graph\Gder{\Phi}{x_0,y_0}=\Tang{\graph\Phi}{(x_0,y_0)}.
$$
Namely, the fact that $v\in\Gder{\Phi}{x_0,y_0}(z)$ means that there exist sequences
$(z_n)_n$ in $\X$, with $z_n\longrightarrow z$, $(v_n)_n$ in $\Y$, with $v_n\longrightarrow v$,
and $(t_n)_n$ in $(0,+\infty)$, with $t_n\downarrow 0$, as $n\to\infty$, such that
\begin{equation}    \label{in:defgrphder}
  y_0+t_nv_n\in\Phi(x_0+t_nz_n),\quad\forall n\in\N.
\end{equation}
From the very definition, one readily sees that $\Gder{\Phi}{x_0,y_0}$ is
a p.h. set-valued mapping.

If for each $v\in\Gder{\Phi}{x_0,y_0}(z)$ and choice of the sequence $(t_n)_n$ in $(0,+\infty)$,
with $t_n\downarrow 0$, there exist sequences $(z_n)_n$ in $\X$, with $z_n\longrightarrow z$,
and $(v_n)_n$ in $\Y$, with $v_n\longrightarrow v$, such that inclusion $(\ref{in:defgrphder})$
holds, then $\Phi$ is said to be protodifferentiable at $(x_0,y_0)\in\graph\Phi$.
Detailed accounts on graphical differentiation can be found in \cite{AubFra09,
DonRoc14,RocWet98,Schi07}.

An aspect which should be the subject of meditation is that, while outer/inner
approximations provided by prederivatives refer to an element $x_0\in\dom\Phi$
and consider the whole set $\Phi(x_0)$, graphical derivatives
refer to an element $(x_0,y_0)\in\graph\Phi$ and are affected only by the
local geometry of $\Phi$ near $(x_0,y_0)$.

Other convenient derivative-like objects for set-valued mappings are
coderivatives. They can be introduced via normal cones to the graph
of set-valued mappings. Accordingly, the Fr\'echet coderivative of
$\Phi:\X\rightrightarrows\Y$ at $(x_0,y_0)\in\graph\Phi$ is the
set-valued mapping $\Coder{\Phi}{x_0,y_0}:\Y^*\rightrightarrows\X^*$
defined by
$$
  \Coder{\Phi}{x_0,y_0}(y^*)=\{x^*\ |\ (x^*,-y^*)\in
  \Ncone{\graph\Phi}{(x_0,y_0)}\},
$$
where, if $S\subseteq\X\times\Y$ and $w_0\in S$, the subset
$$
   \Ncone{S}{w_0}=\left\{w^*\in\X^*\times\Y^*\ |\
   \limsup_{S\ni w\to w_0}{\langle w^*,w- w_0\rangle\over
    \|w-w_0\|}\le 0\right\}
$$
denotes the Fr\'echet normal cone to $S$ at $w_0$. For more
material on coderivative, see \cite{BorZhu05,DonRoc14,Mord06,RocWet98,Schi07}.
In view of a subsequent employment, let us recall the following equality
linking the Fr\'echet normal cone (and hence the coderivative) with
the Fr\'echet subdifferential via the indicator and the distance
function:
\begin{equation}    \label{eq:Fnconrep}
   \Ncone{S}{w_0}=\Fsubd\ind{\cdot}{S}(w_0)=
   \bigcup_{\kappa>0}\kappa\,\Fsubd\dist{\cdot}{S}(w_0),
\end{equation}
where
$$
  \Fsubd\varphi(x_0)=\left\{x^*\in\X^*\ |\
   \liminf_{x\to x_0}{\varphi(x)-\varphi(x_0)-
   \langle x^*,x-x_0\rangle\over\|x-x_0\|}\ge 0\right\}
$$
denotes the Fr\'echet subdifferential of a function $\varphi:
\X\longrightarrow\R\cup\{\pm\infty\}$ at $x_0\in\dom\varphi$
(see, for instance, \cite[Corollary 1.96]{Mord06}).

The next proposition explains how outer prederivatives of a set-valued
mapping $\Phi$ can be exploited for estimating the strong slope of the
function $\nu_{\Phi,C}$, at points which fail to be a solution of the
set-valued inclusion defined by $\Phi$ and $C$: roughly speaking, such
first-order approximations of $\Phi$ must
admit a direction, along which their values are strictly inner to $C$.

\begin{proposition}  \label{pro:stsloutder}
Let $\Phi:\X\rightrightarrows\Y$ be a set-valued mapping between
Banach spaces, let $C\subseteq\Y$ be a closed, convex cone and let
$x_0\in\X\backslash \Phi^{+1}(C)$. Suppose that

(i) $\Phi$ is l.s.c. on $x_0$;

(ii) $\Phi$ admits $\Preder{\Phi}{x_0}{\cdot}$ as an outer
prederivative at $x_0$;

(iii) it holds
\begin{equation}     \label{eq:defsigmaH}
   \sigma_H(x_0)=\sup_{u\in\Usfer} \core{C}{\Preder{\Phi}{x_0}{u}}>0.
\end{equation}
\noindent Then, the following estimate holds
\begin{equation}     \label{in:stslnusigma}
  \stsl{\nu_{\Phi,C}}(x_0)\ge\sigma_H(x_0).
\end{equation}
\end{proposition}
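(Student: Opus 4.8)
The plan is to produce, for each $\sigma<\sigma_H(x_0)$, a ray $t\mapsto x_0+tu$ along which $\nu_{\Phi,C}$ decreases at rate at least $\sigma$, and then to let $\sigma\uparrow\sigma_H(x_0)$. Since $x_0\notin\Phi^{+1}(C)$ one has $\nu_{\Phi,C}(x_0)=\exc{\Phi(x_0)}{C}>0$, and I would treat the (only interesting) case in which this value is finite.

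First I would select the direction. Given $\sigma<\sigma_H(x_0)$, definition $(\ref{eq:defsigmaH})$ furnishes $u\in\Usfer$ with $\core{C}{\Preder{\Phi}{x_0}{u}}>\sigma$, and then, by the meaning of the $\ast$-difference in $(\ref{eq:defcore})$, I may fix $r\in(\sigma,\core{C}{\Preder{\Phi}{x_0}{u}})$ with $\Preder{\Phi}{x_0}{u}+r\Uball\subseteq C$. Invoking the positive homogeneity of the prederivative and the cone property of $C$, this upgrades, for every $t>0$, to
\begin{equation*}
  \Preder{\Phi}{x_0}{tu}+tr\Uball=t\bigl(\Preder{\Phi}{x_0}{u}+r\Uball\bigr)\subseteq C.
\end{equation*}
Next I would probe $\nu_{\Phi,C}$ along $u$: the outer prederivative inequality (hypothesis (ii)) gives, for each $\epsilon>0$, some $\delta>0$ such that $\Phi(x_0+tu)\subseteq\Phi(x_0)+\Preder{\Phi}{x_0}{tu}+\epsilon t\Uball$ for $0<t<\delta$, because $x_0+tu-x_0=tu$ has norm $t$. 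Passing to excesses and using Remark \ref{rem:provectexc}(i),(iii) yields
\begin{equation*}
  \nu_{\Phi,C}(x_0+tu)\le\exc{\Phi(x_0)+\Preder{\Phi}{x_0}{tu}}{C}+\epsilon t.
\end{equation*}

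The heart of the argument is to convert the depth $r$ into a genuine decrease of the excess. From the first display, $\Phi(x_0)+\Preder{\Phi}{x_0}{tu}+tr\Uball\subseteq\Phi(x_0)+C$, so Remark \ref{rem:provectexc}(ii) and the monotonicity of the excess give $\exc{\Phi(x_0)+\Preder{\Phi}{x_0}{tu}+tr\Uball}{C}\le\exc{\Phi(x_0)}{C}=\nu_{\Phi,C}(x_0)$. When $\Phi(x_0)+\Preder{\Phi}{x_0}{tu}\not\subseteq C$, Remark \ref{rem:provectexc}(v) evaluates the left-hand side as $\exc{\Phi(x_0)+\Preder{\Phi}{x_0}{tu}}{C}+tr$, whence
\begin{equation*}
  \exc{\Phi(x_0)+\Preder{\Phi}{x_0}{tu}}{C}\le\nu_{\Phi,C}(x_0)-tr;
\end{equation*}
in the complementary case the left-hand side is $0$ and the same bound holds once $tr\le\nu_{\Phi,C}(x_0)$, i.e. for $t$ small. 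Combining with the preceding display, for all small $t>0$ one gets $\nu_{\Phi,C}(x_0)-\nu_{\Phi,C}(x_0+tu)\ge(r-\epsilon)t$, so the difference quotient over $\|tu\|=t$ is at least $r-\epsilon$. Choosing $\epsilon<r$ shows $x_0$ is not a local minimizer of $\nu_{\Phi,C}$, so the $\limsup$ branch of the strong slope is in force; letting $t\downarrow0$ and then $\epsilon\downarrow0$ gives $\stsl{\nu_{\Phi,C}}(x_0)\ge r>\sigma$, and the arbitrariness of $\sigma$ yields $(\ref{in:stslnusigma})$.

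I expect this third step to be the main obstacle: it is where the purely algebraic datum ``$\Preder{\Phi}{x_0}{u}$ sits at depth $r$ inside $C$'', encoded by the $\ast$-difference, must be turned into a quantitative drop of $\exc{\Phi(x_0)+\Preder{\Phi}{x_0}{tu}}{C}$ by precisely $tr$. Remark \ref{rem:provectexc}(ii),(v) accomplish this, at the cost of the minor case split on whether $\Phi(x_0)+\Preder{\Phi}{x_0}{tu}$ is already contained in $C$ and of keeping $t$ small enough that $tr<\nu_{\Phi,C}(x_0)$. I note that the lower semicontinuity hypothesis (i) does not enter the estimate itself; its purpose, via Remark \ref{rem:lscnu}, is to render $\nu_{\Phi,C}$ lower semicontinuous, which is what makes the strong slope a meaningful descent indicator when this proposition feeds the Ekeland-type machinery of Lemma \ref{lem:parbaslem}.
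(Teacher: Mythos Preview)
Your proof is correct and follows essentially the same route as the paper's: pick a direction $u$ from the supremum in (iii), use the outer prederivative inclusion together with positive homogeneity, and apply the excess identities of Remark~\ref{rem:provectexc} to extract a linear decrease of $\nu_{\Phi,C}$ along the ray $x_0+tu$. The one organizational difference worth noting is the role of hypothesis~(i): the paper uses the lower semicontinuity of $\nu_{\Phi,C}$ (via Remark~\ref{rem:lscnu}) to ensure $\Phi(x_0+tu)\not\subseteq C$ for small $t$, so that Remark~\ref{rem:provectexc}(v) applies directly to $\exc{\Phi(x_0+tu)+tr\Uball}{C}$; you instead apply Remark~\ref{rem:provectexc}(v) to $\exc{\Phi(x_0)+\Preder{\Phi}{x_0}{tu}+tr\Uball}{C}$ and absorb the residual case by a short split, which---as you correctly observe---makes the estimate itself independent of~(i).
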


\begin{proof}
In the light of Remark \ref{rem:lscnu},
by virtue of hypothesis (i), the function ${\nu_{\Phi,C}}$ turns out
to be l.s.c. on $x_0$. Since it is $x_0\in\X\backslash\Phi^{+1}(C)$, one has
${\nu_{\Phi,C}}(x_0)>0$. Then, there exists $\delta>0$ such that
${\nu_{\Phi,C}}(x)>0$ for every $x\in\ball{x_0}{\delta}$.
According to hypothesis (ii), fixed any $\epsilon\in(0,\sigma_H(x_0))$
there exists $\delta_\epsilon\in (0,\delta)$ such that
\begin{equation}    \label{in:prederx0v}
  \Phi(x_0+v)\subseteq \Phi(x_0)+\Preder{\Phi}{x_0}{v}+\epsilon
  \|v\|\Uball, \quad\forall v\in\delta_\epsilon\Uball.
\end{equation}
By virtue of hypothesis (iii), there exists $u_\epsilon\in\Usfer$
such that
$$
  \core{C}{\Preder{\Phi}{x_0}{u_\epsilon}}>\sigma_H(x_0)-\epsilon,
$$
and hence, recalling definition $(\ref{eq:defcore})$, there exists $r_\epsilon
>\sigma_H(x_0)-\epsilon$ such that
$$
  \Preder{\Phi}{x_0}{u_\epsilon}+r_\epsilon\Uball\subseteq C.
$$
Since the set-valued mapping $\Preder{\Phi}{x_0}{\cdot}$ is positively
homogeneous and $C$ is a cone, the last inclusion entails
\begin{equation}    \label{in:predercoreC}
  \Preder{\Phi}{x_0}{tu_\epsilon}+tr_\epsilon\Uball\subseteq C,
  \quad\forall t>0.
\end{equation}
By combining inclusions $(\ref{in:prederx0v})$ and
$(\ref{in:predercoreC})$, one finds
\begin{eqnarray*}
  \Phi(x_0+tu_\epsilon)+tr_\epsilon\Uball &\subseteq &
  \Phi(x_0)+[\Preder{\Phi}{x_0}{tu_\epsilon}+tr_\epsilon\Uball]
  +t\epsilon\Uball  \\
  &\subseteq & \Phi(x_0)+C+t\epsilon\Uball,\quad
  \forall t\in (0,\delta_\epsilon).
\end{eqnarray*}
Consequently, it results in
\begin{equation}     \label{in:excPhixx0}
  \exc{\Phi(x_0+tu_\epsilon)+tr_\epsilon\Uball}{C}\le
  \exc{\Phi(x_0)+C+t\epsilon\Uball}{C},\quad
  \forall t\in (0,\delta_\epsilon).
\end{equation}
On the other hand, by Remark \ref{rem:provectexc}(ii) and (v),
recalling that $\Phi(x_0)\not\subseteq C$ as well as $\Phi(x_0+tu_\epsilon)
\not\subseteq C$ for every $t\in (0,\delta_\epsilon)$, because
$\delta_\epsilon<\delta$, so $x_0+tu_\epsilon\in [\nu_{\Phi,C}>0]$,
one obtains
$$
  \exc{\Phi(x_0+tu_\epsilon)+tr_\epsilon\Uball}{C}=
  \nu_{\Phi,C}(x_0+tu_\epsilon)+tr_\epsilon
$$
and
$$
  \exc{\Phi(x_0)+C+t\epsilon\Uball}{C}=\nu_{\Phi,C}(x_0)+t\epsilon.
$$
In the light of inequality $(\ref{in:excPhixx0})$, the above equalities
yield
$$
  {\nu_{\Phi,C}(x_0)-\nu_{\Phi,C}(x_0+tu_\epsilon)\over t}\ge
  r_\epsilon-\epsilon,\quad\forall t\in (0,\delta_\epsilon),
$$
whence one gets
$$
   \sup_{x\in\ball{x_0}{t}\backslash\{x_0\}}
   {\nu_{\Phi,C}(x_0)-\nu_{\Phi,C}(x)\over \|x-x_0\|}
   \ge  r_\epsilon-\epsilon>\sigma_H(x_0)-2\epsilon,
   \quad\forall t\in (0,\delta_\epsilon).
$$
Thus, one obtains
$$
  \stsl{\nu_{\Phi,C}}(x_0)=\lim_{t\downarrow 0}
  \sup_{x\in\ball{x_0}{t}\backslash\{x_0\}}
   {\nu_{\Phi,C}(x_0)-\nu_{\Phi,C}(x)\over \|x-x_0\|}\ge
   \sigma_H(x_0)-2\epsilon.
$$
By arbitrariness of $\epsilon$ the estimate in $(\ref{in:stslnusigma})$
follows from the last inequality.
\end{proof}

The proof of Proposition \ref{pro:stsloutder} should help to understand a possible
reading of the crucial condition $(\ref{eq:defsigmaH})$: it prescribes a behaviour
of $\Phi$ near $x_0$, which results in the existence of a descent direction for
$\nu_{\Phi,C}$, with a rate controlled by $\sigma_H(x_0)$.

\begin{remark}
As a caveat regarding condition $(\ref{eq:defsigmaH})$, it must be noticed
that such a requirement can be satisfied only if $\inte C\ne\varnothing$.
\end{remark}


\section{Lipschitzian behaviour in metric spaces}    \label{Sect:3}

Pursuing the research line presented in \cite{Uder20b}, the
study of properties of the solution mapping $\Solv$ to $(\SVI_p)$
will be carried out by means of the merit function $\nu_{F,C}:
P\times X\longrightarrow\R\cup\{\pm\infty\}$, given by
\begin{equation}     \label{eq:defnuFC}
  \excFC{p}{x}=\exc{F(p,x)}{C}=\sup_{y\in F(p,x)}\dist{y}{C}.
\end{equation}
Such an approach allows one to embed the analysis of the quantitative stability
properties of $\Solv$ into a framework, which is suitable for applying the
parametric basic lemma.

In what follows, consistently with the material exposed in Section \ref{Sect:2},
$\dom F=P\times X$ will be kept as a standing assumption, so it is
$[\excFC{p}{x}\ge 0]=P\times X$.

\begin{proposition}[Parametric solvability and error bound]   \label{pro:solverboS}
Given a parameterized problem $(\SVI_p)$, let $(\bar p,\bar x)\in P\times X$.
Suppose that:

(i) $(X,d)$ is metrically complete;

(ii) $\bar x\in\Solv(\bar p)$;

(iii) the set-valued mapping $p\leadsto F(p,\bar x)$ is Hausdorff
$C$-u.s.c. at $\bar p$;

(iv) there exists $\delta_1>0$ such that for every $p\in
\ball{\bar p}{\delta_1}$ each set-valued mapping $x\leadsto F(p,x)$
is l.s.c. on $X$;

(v) there exists $\delta_2>0$ such that
$$
   \sigma_\nabla=\inf\{\pastsl{\nu_{F,C}}(p,x)\ |\ (p,x)\in [\ball{\bar p}{\delta_2}
   \times \ball{\bar x}{\delta_2}]\backslash\graph\Solv\}>0.
$$

\noindent Then, there exist positive $\eta$ and $\zeta$ such that

(t) $\Solv(p)\cap\ball{\bar x}{\eta}\ne\varnothing$, for every
$p\in\ball{\bar p}{\zeta}$;

(tt) the following estimate holds
\begin{equation}    \label{in:parerboFC}
    \dist{x}{\Solv(p)}\le{\nu_{F,C}(p,x)\over \sigma_\nabla},
    \quad\forall
    (p,x)\in\ball{\bar p}{\zeta}\times \ball{\bar x}{\eta}.
\end{equation}

\end{proposition}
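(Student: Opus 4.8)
The plan is to recognize Proposition \ref{pro:solverboS} as a direct specialization of the Parametric basic lemma (Lemma \ref{lem:parbaslem}) to the merit function $\nu = \nu_{F,C}$ introduced in $(\ref{eq:defnuFC})$. Since $\dom F = P\times X$ is a standing assumption, $\nu_{F,C}$ takes values in $[0,+\infty]$, so it is a legitimate candidate for the role of $\nu$ in that lemma. The whole argument then reduces to verifying the five hypotheses of the lemma with this choice and translating its two conclusions back into statements about $\Solv$.

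The cornerstone identification I would record first is that, because $\dom F = P\times X$ forces $\nu_{F,C}\ge 0$, one has $\Solv(p) = [\nu_{F,C}(p,\cdot)=0]$ for each $p$, and consequently $\graph\Solv = [\nu_{F,C}=0]$ and $(P\times X)\setminus\graph\Solv = [\nu_{F,C}>0]$. With this in hand the remaining checks are short. Hypothesis (i) of the lemma is exactly (i) here. For hypothesis (ii), $\bar x\in\Solv(\bar p)$ means $F(\bar p,\bar x)\subseteq C$, hence $\nu_{F,C}(\bar p,\bar x) = \exc{F(\bar p,\bar x)}{C} = 0$. Hypotheses (iii) and (iv) of the lemma follow from the present (iii) and (iv) through Remark \ref{rem:lscnu}: Hausdorff $C$-u.s.c.\ of $p\mapsto F(p,\bar x)$ at $\bar p$ yields u.s.c.\ of $p\mapsto\nu_{F,C}(p,\bar x)$ at $\bar p$, while l.s.c.\ of $x\mapsto F(p,x)$ on $X$ for $p\in\ball{\bar p}{\delta_1}$ yields l.s.c.\ of $x\mapsto\nu_{F,C}(p,x)$ on $X$.

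The one point deserving explicit attention is hypothesis (v). Here the present formulation writes the infimum over $[\ball{\bar p}{\delta_2}\times\ball{\bar x}{\delta_2}]\setminus\graph\Solv$, whereas the lemma phrases it over $[\ball{\bar p}{\delta_2}\times\ball{\bar x}{\delta_2}]\cap[\nu>0]$. By the set identity $(P\times X)\setminus\graph\Solv = [\nu_{F,C}>0]$ established above, these two index sets coincide, so $\sigma_\nabla$ equals the constant $\sigma$ of the lemma and is therefore positive.

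Having matched all five hypotheses, I would invoke Lemma \ref{lem:parbaslem} to obtain positive $\eta$ and $\zeta$ for which its conclusions (t) and (tt) hold. Using once more $[\nu_{F,C}(p,\cdot)=0]=\Solv(p)$, conclusion (t) of the lemma becomes $\Solv(p)\cap\ball{\bar x}{\eta}\ne\varnothing$ for every $p\in\ball{\bar p}{\zeta}$, which is (t), and the error bound $(\ref{in:parerbo})$ becomes precisely $(\ref{in:parerboFC})$. I do not anticipate a genuine obstacle: the entire mathematical content is carried by the lemma, and the only care required is the routine but essential bookkeeping of the level-set identifications together with the appeal to Remark \ref{rem:lscnu} for the two semicontinuity transfers.
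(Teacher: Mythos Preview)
Your proposal is correct and follows exactly the approach taken in the paper: apply Lemma~\ref{lem:parbaslem} with $\nu=\nu_{F,C}$, use Remark~\ref{rem:lscnu} to transfer the semicontinuity assumptions on $F$ to $\nu_{F,C}$, and identify $\Solv(p)=[\nu_{F,C}(p,\cdot)=0]$. Your write-up is in fact more explicit than the paper's own proof, which dispatches the verification of the five hypotheses in a single sentence.
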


\begin{proof}
It suffices to apply the parametric basic lemma (Lemma \ref{lem:parbaslem})
with $\nu=\nu_{F,C}$, after having noted that, by Remark \ref{rem:lscnu}, under
the current hypotheses $\nu_{F,C}(\cdot,\bar x)$ is u.s.c. at
$\bar p$ and each function $\nu_{F,C}(p,\cdot)$ is l.s.c. on $X$,
for every $p$ near $\bar p$.
Then, it remains to remeber that $\Solv(p)=[\nu_{F,C}(p,\cdot)=0]$.
\end{proof}

It is worth remarking that, as a consequence of assertion (t), one gets that
each problem $(\SVI_p)$, for every $p$ near $\bar p$, does admit a solution.
In other words, it is $\bar p\in\inte\dom\Solv$.
The error bound inequality $(\ref{in:parerboFC})$ says that $\nu_{F,C}$
works as a residual in estimating the distance from the solution set to
$(\SVI_p)$. While to compute the term in the left side of $(\ref{in:parerboFC})$
one needs to find explicitly the solutions to $(\SVI_p)$, what might
be considerably difficult, the residual in
the right-side is expressed in terms of problem data, so is expected to
be more easily computed.

An important consequence of the above error bound can be established upon
an additional hypothesis on $F$. This leads to the next result about
the Lipschitzian behaviour of $\Solv$.

\begin{theorem}[Aubin continuity of $\Solv$]     \label{thm:AubconSol}
Given a parameterized problem $(\SVI_p)$, let $(\bar p,\bar x)\in P\times X$.
Suppose that all the hypotheses of Proposition \ref{pro:solverboS} are
in force and suppose that

(vi) there exist positive $\tau$ and $s$ such that for every $x\in\ball{\bar x}{s}$
each set-valued mapping $p\leadsto F(p,x)$ is Lipschitz with rate $\ell$ in
$\ball{\bar p}{\tau}$.

\noindent Then, $\bar p\in\inte\dom\Solv$, $\Solv$ is Aubin continuous
at $(\bar p,\bar x)$ and the following estimates holds
\begin{equation}
   \lip{\Solv}{\bar p,\bar x}\le {\ell\over\sigma_\nabla}.
\end{equation}
\end{theorem}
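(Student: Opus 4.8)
The plan is to exploit the error bound $(\ref{in:parerboFC})$ already furnished by Proposition \ref{pro:solverboS}, combining it with the new Lipschitz hypothesis (vi). The assertion $\bar p\in\inte\dom\Solv$ is immediate, since assertion (t) of Proposition \ref{pro:solverboS} guarantees $\Solv(p)\ne\varnothing$ for every $p\in\ball{\bar p}{\zeta}$. To establish the Aubin continuity with the announced modulus, I would set $\delta=\min\{\zeta,\tau\}$ and $r=\min\{\eta,s\}$, where $\zeta,\eta$ are the radii produced by Proposition \ref{pro:solverboS} and $\tau,s$ are those appearing in hypothesis (vi), and then verify inequality $(\ref{in:Aubcont})$ with $\kappa=\ell/\sigma_\nabla$ on these neighbourhoods.

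The heart of the argument is a short chain of estimates. Fix $p_1,p_2\in\ball{\bar p}{\delta}$ and $x\in\Solv(p_2)\cap\ball{\bar x}{r}$. Since $x\in\ball{\bar x}{\eta}$ and $p_1\in\ball{\bar p}{\zeta}$, the error bound $(\ref{in:parerboFC})$ applies and yields
$$
  \dist{x}{\Solv(p_1)}\le\frac{\nu_{F,C}(p_1,x)}{\sigma_\nabla}.
$$
It therefore remains to control the residual $\nu_{F,C}(p_1,x)=\exc{F(p_1,x)}{C}$. Because $x\in\Solv(p_2)$ means $F(p_2,x)\subseteq C$, that is $\exc{F(p_2,x)}{C}=0$, the triangle-type inequality for the excess (Remark \ref{rem:provectexc}(vi), which in fact holds in any metric space) gives
$$
  \nu_{F,C}(p_1,x)\le\exc{F(p_1,x)}{F(p_2,x)}+\exc{F(p_2,x)}{C}
  =\exc{F(p_1,x)}{F(p_2,x)}.
$$

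Finally, since $x\in\ball{\bar x}{s}$ and $p_1,p_2\in\ball{\bar p}{\tau}$, hypothesis (vi) ensures $\haus{F(p_1,x)}{F(p_2,x)}\le\ell\, d(p_1,p_2)$, whence $\exc{F(p_1,x)}{F(p_2,x)}\le\ell\, d(p_1,p_2)$. Chaining the three displays produces
$$
  \dist{x}{\Solv(p_1)}\le\frac{\ell\, d(p_1,p_2)}{\sigma_\nabla},
$$
which is exactly $(\ref{in:Aubcont})$ with rate $\ell/\sigma_\nabla$, so that the modulus estimate follows by the very definition of $\lip{\Solv}{\bar p,\bar x}$. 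I do not expect any genuine obstacle: the only points requiring care are the bookkeeping of the radii (ensuring that both the error bound and the Lipschitz estimate are simultaneously valid, which is why the minima are taken) and the observation that the excess triangle inequality, though recorded in the normed-space subsection, is a purely metric fact and hence is legitimately invoked in this metric-space theorem.
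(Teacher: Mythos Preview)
Your proof is correct and follows essentially the same approach as the paper: both set $\delta=\min\{\zeta,\tau\}$, $r=\min\{\eta,s\}$, bound $\nu_{F,C}(p_1,x)$ via the excess triangle inequality together with hypothesis (vi), and then feed this into the error bound $(\ref{in:parerboFC})$. Your remark that the excess triangle inequality is a purely metric fact (so its placement in Remark \ref{rem:provectexc} is not a restriction) is apt.
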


\begin{proof}
From assertion (t) in Proposition \ref{pro:solverboS} it follows
that $\bar p\in\inte\dom\Solv$ and that there exist positive $\zeta,\,
\eta$ such that the estimate $(\ref{in:parerboFC})$ holds true.
So, setting $\delta=\min\{\zeta,\tau\}$ and $r=\min\{\eta,s\}$,
by remembering the inequality in Remark \ref{rem:provectexc}(vi),
one obtains
\begin{eqnarray*}
  \nu_{F,C}(p_1,x)&\le&\exc{F(p_1,x)}{F(p_2,x)}+\exc{F(p_2,x)}{C} \\
  &\le& \ell d(p_1,p_2),\quad\forall p_1,\, p_2\in\ball{\bar p}{\delta},
  \ \forall x\in\ball{\bar x}{r}\cap \Solv(p_2).
\end{eqnarray*}
Thus, on account of inequality $(\ref{in:parerboFC})$, it results in
\begin{equation}     \label{in:AubconSolv}
  \dist{x}{\Solv(p_1)}\le {\ell\over\sigma_\nabla}d(p_1,p_2),\quad
  \forall p_1,\, p_2\in\ball{\bar p}{\delta},
  \ \forall x\in\ball{\bar x}{r}\cap \Solv(p_2),
\end{equation}
which shows that condition $(\ref{in:Aubcont})$ is satisfied with
rate $\kappa=\ell/\sigma_\nabla$. The estimate in the assertion
comes as a direct consequence of the definition of modulus of
Aubin continuity, in the light of inequality $(\ref{in:AubconSolv})$.
\end{proof}

A comparison of Theorem \ref{thm:AubconSol} with \cite[Theorem 3.3]{Uder20b}
should be useful in order to evaluate its impact. The latter result provides
a sufficient condition for the calmness of $\Solv$, under a milder
set of hypotheses. Nonetheless, inasmuch as Aubin continuity implies
calmness, Theorem \ref{thm:AubconSol} establishes an enhanced
Lipschitzian property of $\Solv$.

In the same vein, it is worth noting that, since Aubin continuity
implies Lipschitz lower semicontinuity as seen in Section \ref{Sect:2},
Theorem \ref{thm:AubconSol} contains a sufficient condition also
for the Lipschitz lower semicontinuity of $\Solv$. Of course, in
establishing a stronger Lipschitz behaviour, the invoked hypotheses
are stronger than the ones in \cite[Theorem 3.1]{Uder20b}, which is
a condition specifically tailored for Lipschitz lower semicontinuity.

The next example aims at illustrating the crucial role played by
the condition in hypothesis (v) of Proposition \ref{pro:solverboS}.

\begin{example}
Let $P=X=\R$ and $Y=\R^m$ be endowed with their usual (Euclidean) metric
structure. Consider the parameterized set-valued inclusion $(\SVI_p)$
with data $F:\R\times\R\rightrightarrows\R^m$ and $C$ defined by
$$
  F(p,x)=\{y=(y_1,\dots,y_m)\in\R^m\ |\ \min_{i=1,\dots,m}y_i\ge x^2-p\}
  \ \hbox{ and }\ C=\R^m_+.
$$
Fixed $\bar p=0$, it is clear that $\bar x=0\in\Solv(0)$. More generally,
since it is readily seen that $F(p,x)\subseteq\R^m_+$ iff $x^2-p\ge 0$,
for the problem under consideration the solution set-valued mapping
$\Solv:\R\rightrightarrows\R$ can be computed explicitly, resulting in
\begin{eqnarray*}
  \Solv(p)=\left\{\begin{array}{ll}
  \R, & \forall p\in (-\infty,0], \\
  \\
  (-\infty,-\sqrt{p}]\cup [\sqrt{p},+\infty), & \forall p\in (0,+\infty).
    \end{array}
  \right.
\end{eqnarray*}
The set-valued mapping $p\leadsto F(p,0)$ is evidently Hausdorff $\R^m_+$-u.s.c.
(though failing to be u.s.c.) at $\bar p=0$. Moreover, as a consequence of
the continuity of the function $x\mapsto x^2-p$, each set-valued mapping
$x\leadsto F(p,x)=(x^2-p)\{(1,\dots,1)\}+\R^m_+$ is l.s.c. on $\R$.
From the definition of $F$, one deduces
\begin{eqnarray*}
  \excFC{p}{x}=\left\{\begin{array}{ll}
  0, & \forall (p,x)\in \graph\Solv, \\
  \\
  \sqrt{m}|x^2-p|, & \forall (p,x)\in (\R\times\R)\backslash\graph\Solv.
    \end{array}
  \right.
\end{eqnarray*}
Consequently, fixed any $(p,x)\in (\R\times\R)\backslash\graph\Solv$,
one obtains
$$
  \pastsl{\nu_{F,C}}(p,x)=\left|{\partial\over\partial x}\sqrt{m}(p-x^2)
  \right|=2\sqrt{m}|x|.
$$
Thus, if considering the $\graph\Solv$ near its point $(0,0)$, one sees that for
any fixed $\delta>0$ there exists $(p,0)\in [\ball{0}{\delta}\times
\ball{0}{\delta}]\backslash\graph\Solv$ such that $\pastsl{\nu_{F,C}}(p,0)=0$.
This leads to conclude that $\sigma_\nabla=0$, so hypothesis (v) of
Proposition \ref{pro:solverboS} in this case is not satisfied.
One can check that, whereas the nonemptiness in assertion (t) actually takes
place, the function $\nu_{F,C}$ fails to work as a residual for $\dist{x}{\Solv(p)}$.
Indeed, taking $x=0$, one finds
\begin{eqnarray}    \label{eq:dist0Spex}
  \dist{0}{\Solv(p)}=\left\{\begin{array}{ll}
  0, &  \forall p\in (-\infty,0], \\
  \\
  \sqrt{p}, & \forall p\in (0,+\infty).
    \end{array}
  \right.
\end{eqnarray}
Clearly, the inequality
$$
  \dist{0}{\Solv(p)}=\sqrt{p}\le\kappa p=\kappa\excFC{p}{0},
  \quad\forall p\in (0,\zeta),
$$
can not be true for any choice of positive $\kappa$ and $\zeta$.
For a similar reason, it is worth noting that the expression
$(\ref{eq:dist0Spex})$ reveals that $\Solv$ fails to be Aubin
continuous at $(0,0)$. Nevertheless, one can check by using its definition
that the set-valued mapping $p\leadsto F(p,x)$ is Lipschitz continuous
with rate $\ell=1$ in $\R$, for every $x\in\R$.
\end{example}

\vskip1cm


\section{First-order analysis in normed vector spaces}    \label{Sect:4}

Unless otherwise stated, throughout the present section, $(\Pb,\|\cdot\|)$ and $(\Y,\|\cdot\|)$
will be normed vector spaces, whereas $(\X,\|\cdot\|)$ will be assumed to be a Banach
space. Whenever considered, the product space $\Pb\times\X$ will be
assumed to be equipped with the max-norm $\|(p,x)\|=\max\{\|p\|,\, \|x\|\}$.
Moreover, in view of the employment of condition $(\ref{eq:defsigmaH})$, it will
be assumed $\inte C\ne\varnothing$.

In a normed vector space setting,
a sufficient condition for error bounds and the Aubin continuity of a multifunction,
defined implicitly by $(\SVI_p)$, can be established in terms of outer
prederivative as follows.

\begin{proposition}      \label{pro:vecterboAub}
Given a parameterized set-valued inclusion $(\SVI_p)$, let $(\bar p,\bar x)
\in\Pb\times \X$.
Suppose that:

(i) $\bar x\in\Solv(\bar p)$;

(ii) the set-valued mapping $p\leadsto F(p,\bar x)$ is Hausdorff $C$-u.s.c.
at $\bar p$;

(iii) there exists $\delta_1>0$ such that for every $p\in
\ball{\bar p}{\delta_1}$ each set-valued mapping $x\leadsto F(p,x)$
is l.s.c. on $\X$;

(iv) there exists $\delta_2>0$ such that, for every $p\in
\ball{\bar p}{\delta_2}$, each set-valued mapping $x\leadsto F(p,x)$
admits an outer prederivative $H_{F(p,\cdot)}(x;\cdot):\X\rightrightarrows\Y$
at each point $x\in \ball{\bar p}{\delta_2}$;

(v) it holds
$$
   \sigma_H(\bar p,\bar x)=\inf\{\sigma_{H_{F(p,\cdot)}}(x)\ |\
   (p,x)\in [\ball{\bar p}{\delta_2}
   \times \ball{\bar x}{\delta_2}]\backslash\graph\Solv\}>0,
$$
where $\sigma_{H_{F(p,\cdot)}}(x)$ is defined as in $(\ref{eq:defsigmaH})$.

\noindent Then, there exist positive $\eta$ and $\zeta$ such that
the estimate $(\ref{in:parerboFC})$ holds true with $\sigma_\nabla$ replaced
by $\sigma_H(\bar p,\bar x)$.

\noindent If, in addition,

(vi) there exist positive $\tau$ and $s$ such that for every $x\in\ball{\bar x}{s}$
each set-valued mapping $p\leadsto F(p,x)$ is Lipschitz with rate $\ell$ in
$\ball{\bar p}{\tau}$,

\noindent then $\Solv$ is Aubin continuous
at $(\bar p,\bar x)$ and the following estimates holds
\begin{equation}
   \lip{\Solv}{\bar p,\bar x}\le {\ell\over\sigma_H(\bar p,\bar x)}.
\end{equation}
\end{proposition}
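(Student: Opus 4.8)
The plan is to deduce the statement from the machinery already assembled: Proposition \ref{pro:solverboS} and Theorem \ref{thm:AubconSol} deliver exactly the asserted error bound and Aubin continuity, but phrased through the partial strong slope of $\nu_{F,C}$. The only hypothesis needing translation is the positivity of $\sigma_\nabla$; the prederivative conditions (iv)--(v) must be shown to force $\sigma_\nabla\ge\sigma_H(\bar p,\bar x)>0$. The instrument that converts the prederivative information into a lower bound on the strong slope is Proposition \ref{pro:stsloutder}, so the whole proof is a reduction argument rather than a fresh Ekeland-type estimate.

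Concretely, I would fix $p\in\ball{\bar p}{\delta_2}$ (shrinking $\delta_2$ so that $\delta_2\le\delta_1$) and set $\Phi_p=F(p,\cdot)$. By the very definitions of the two slopes one has the identity $\stsl{\nu_{\Phi_p,C}}(x)=\pastsl{\nu_{F,C}}(p,x)$, since $\nu_{\Phi_p,C}(\cdot)=\excFC{p}{\cdot}$. Now take any $(p,x)\in[\ball{\bar p}{\delta_2}\times\ball{\bar x}{\delta_2}]\backslash\graph\Solv$: then $x\in\X\backslash\Phi_p^{+1}(C)$, hypothesis (iii) makes $\Phi_p$ l.s.c. at $x$, hypothesis (iv) supplies the outer prederivative $H_{F(p,\cdot)}(x;\cdot)$ of $\Phi_p$ at $x$, and hypothesis (v) yields $\sigma_{H_{F(p,\cdot)}}(x)\ge\sigma_H(\bar p,\bar x)>0$. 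All assumptions of Proposition \ref{pro:stsloutder}, applied to $\Phi_p$ at the non-solution point $x$, are thus in force, and it returns $\stsl{\nu_{\Phi_p,C}}(x)\ge\sigma_{H_{F(p,\cdot)}}(x)$.

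Chaining these inequalities with the slope identity gives $\pastsl{\nu_{F,C}}(p,x)\ge\sigma_H(\bar p,\bar x)$ at every point of the box outside $\graph\Solv$, whence $\sigma_\nabla\ge\sigma_H(\bar p,\bar x)>0$. After relabelling, hypotheses (i)--(iv) of Proposition \ref{pro:solverboS} reduce to the completeness of the Banach space $\X$ together with the present (i)--(iii), so that proposition applies and produces $\eta,\zeta>0$ for which the error bound $(\ref{in:parerboFC})$ holds. Because $\sigma_\nabla\ge\sigma_H(\bar p,\bar x)$, replacing $\sigma_\nabla$ by $\sigma_H(\bar p,\bar x)$ only enlarges the right-hand side, which establishes the first assertion. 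Adding hypothesis (vi) puts all the assumptions of Theorem \ref{thm:AubconSol} in force, so $\Solv$ is Aubin continuous at $(\bar p,\bar x)$ with $\lip{\Solv}{\bar p,\bar x}\le\ell/\sigma_\nabla\le\ell/\sigma_H(\bar p,\bar x)$.

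The step demanding the most care is the \emph{uniform} use of Proposition \ref{pro:stsloutder}: it must be invoked afresh at every non-solution pair $(p,x)$ in the box, and the indispensable positivity $\sigma_{H_{F(p,\cdot)}}(x)>0$ at each such point is precisely what the infimum in hypothesis (v) secures; this is exactly why the condition is phrased as an infimum over the whole box rather than a pointwise requirement at $(\bar p,\bar x)$. A minor discrepancy to flag is that Proposition \ref{pro:stsloutder} is stated with a Banach target $\Y$, whereas here $\Y$ is only assumed normed; however, an inspection of its proof shows that only the local excess estimates of Remark \ref{rem:provectexc} are used and completeness of $\Y$ is never invoked, so the lower bound on the slope persists in the present setting.
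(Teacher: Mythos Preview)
Your proposal is correct and follows essentially the same route as the paper: apply Proposition~\ref{pro:stsloutder} at each non-solution pair $(p,x)$ in the box to obtain $\pastsl{\nu_{F,C}}(p,x)\ge\sigma_{H_{F(p,\cdot)}}(x)$, deduce $\sigma_\nabla\ge\sigma_H(\bar p,\bar x)>0$, and then invoke Proposition~\ref{pro:solverboS} and Theorem~\ref{thm:AubconSol}. Your write-up is in fact more detailed than the paper's, and your remark on the Banach/normed discrepancy for $\Y$ in Proposition~\ref{pro:stsloutder} is a legitimate observation that the paper passes over silently.
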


\begin{proof}
If $(p,x)\not\in\graph\Solv$, then $F(p,x)\not\subseteq C$, or, equivalently,
$x\in\X\backslash F^{+1}(p,\cdot)(C)$. Thus, under the
above  hypotheses it is possible to apply Proposition \ref{pro:stsloutder}.
Consequently, for every $(p,x)\in [\ball{\bar p}{\delta_2} \times
\ball{\bar x}{\delta_2}]\backslash\graph\Solv$ one obtains
$$
  \pastsl{\nu_{F,C}}(p,x)\ge\sigma_{H_{F(p,\cdot)}}(x),
$$
which implies
$$
  \sigma_\nabla\ge\sigma_H(\bar p,\bar x).
$$
The last inequality, on account of hypothesis (v), enables one to
apply Proposition \ref{pro:solverboS} and, under the hypothesis (vi),
Theorem \ref{thm:AubconSol}.
\end{proof}

Proposition \ref{pro:vecterboAub} has the following consequence on the
graphical derivative of $\Solv$, which is relevant to the sensitivity
analysis of $(\SVI_p)$.

\begin{corollary}[Lipschitz continuity of $\Gder{\Solv}{\bar p,\bar x}$]
Given a parameterized set-valued inclusion $(\SVI_p)$, let $(\bar p,\bar x)
\in\Pb\times \X$. Under the hypotheses (i) -- (vi) of Proposition \ref{pro:vecterboAub},
$\dom\Gder{\Solv}{\bar p,\bar x}=\Pb$ and $\Gder{\Solv}{\bar p,\bar x}:\Pb
\rightrightarrows\X$ is Lipschitz continuous on $\Pb$ with rate $\kappa\le
\ell/\sigma_H(\bar p,\bar x)$.
\end{corollary}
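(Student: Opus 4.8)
The plan is to read the statement off from the Aubin continuity of $\Solv$, which Proposition \ref{pro:vecterboAub} guarantees under the hypotheses (i)--(vi): it yields positive $\delta$ and $r$ such that the estimate (\ref{in:Aubcont}) holds with any fixed rate $\kappa>\lip{\Solv}{\bar p,\bar x}$, where $\lip{\Solv}{\bar p,\bar x}\le\ell/\sigma_H(\bar p,\bar x)$. Writing $D=\Gder{\Solv}{\bar p,\bar x}$, whose graph is $\Tang{\graph\Solv}{(\bar p,\bar x)}$ by definition, the two things to establish are $\dom D=\Pb$ and $\haus{D(z_1)}{D(z_2)}\le\kappa\|z_1-z_2\|$ for all $z_1,z_2\in\Pb$; letting $\kappa$ decrease to the modulus and invoking the positive homogeneity of $D$ then packages these into Lipschitz continuity with the asserted rate.

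First I would settle $\dom D=\Pb$. Fix $z\in\Pb$ and a sequence $t_n\downarrow 0$. Specializing (\ref{in:Aubcont}) with $p_2=\bar p$, $x=\bar x\in\Solv(\bar p)$ and $p_1=\bar p+t_nz$ gives $\dist{\bar x}{\Solv(\bar p+t_nz)}\le\kappa t_n\|z\|$ for all large $n$. Choosing $x_n\in\Solv(\bar p+t_nz)$ with $\|x_n-\bar x\|\le\kappa t_n\|z\|+t_n^2$ and setting $v_n=(x_n-\bar x)/t_n$, one has $\bar x+t_nv_n\in\Solv(\bar p+t_nz)$ and $\|v_n\|\le\kappa\|z\|+t_n$. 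Extracting a convergent subsequence $v_n\to v$ and appealing to the sequential description (\ref{in:defgrphder}) of the graphical derivative, one obtains $v\in D(z)$ (with $\|v\|\le\kappa\|z\|$), so $z\in\dom D$.

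For the Lipschitz estimate it suffices, by symmetry, to bound $\exc{D(z_1)}{D(z_2)}$. Take $v_1\in D(z_1)$ together with sequences $z^1_n\to z_1$, $v^1_n\to v_1$, $t_n\downarrow 0$ such that $\bar x+t_nv^1_n\in\Solv(\bar p+t_nz^1_n)$, and put $z^2_n=z^1_n+(z_2-z_1)\to z_2$. For large $n$ all the relevant points lie in the neighborhoods furnished by (\ref{in:Aubcont}), so applying that inequality with $p_2=\bar p+t_nz^1_n$, $x=\bar x+t_nv^1_n$ and $p_1=\bar p+t_nz^2_n$ produces $w_n\in\Solv(\bar p+t_nz^2_n)$ with $\|w_n-(\bar x+t_nv^1_n)\|\le\kappa t_n\|z_2-z_1\|+t_n^2$. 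Setting $v^2_n=(w_n-\bar x)/t_n$ gives $\bar x+t_nv^2_n\in\Solv(\bar p+t_nz^2_n)$ and $\|v^2_n-v^1_n\|\le\kappa\|z_2-z_1\|+t_n$. After extracting a convergent subsequence $v^2_n\to v_2$, the triple $(z^2_n,v^2_n,t_n)$ witnesses $v_2\in D(z_2)$ via (\ref{in:defgrphder}), while $\|v_1-v_2\|=\lim\|v^1_n-v^2_n\|\le\kappa\|z_2-z_1\|$. Hence $\dist{v_1}{D(z_2)}\le\kappa\|z_1-z_2\|$, and the Hausdorff bound follows.

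The main obstacle is the two extractions $v_n\to v$ and $v^2_n\to v_2$: the constructed sequences are only bounded, yet (\ref{in:defgrphder}) demands \emph{strong} convergence of the $\X$-components. When $\X$ is finite-dimensional this is immediate from Bolzano--Weierstrass; in a general Banach space it is the delicate point, since bounded sequences need not admit norm-convergent subsequences and $D(z)$ may even be empty. I would therefore either restrict $\X$ to finite dimensions for this consequence, or supply the passage to the limit from the standing structure, using the closedness of $\Tang{\graph\Solv}{(\bar p,\bar x)}$ together with the variational characterization (\ref{eq:Tangchar}) to promote the ``approximately tangent'' directions built above to genuine elements of $\graph D$. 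Everything else is bookkeeping of the Aubin estimate along vanishing scales.
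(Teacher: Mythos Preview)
Your approach is precisely the one the paper takes: the paper's proof consists of a single sentence invoking \cite[Exercise 9.49]{RocWet98} (which, in finite dimensions, is exactly the Aubin-continuity-to-Lipschitz-derivative passage you have written out) together with the bare assertion that ``a perusal of the argument suggested there certifies that the finite-dimensional setting does not affect the reasoning.'' So at the level of strategy there is nothing to compare---you have simply unpacked what the citation contains.

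Where your write-up is actually more scrupulous than the paper is in flagging the compactness obstruction. The extractions $v_n\to v$ and $v^2_n\to v_2$ are exactly the place where the Rockafellar--Wets argument uses $\X=\R^n$, and your observation that bounded sequences in a general Banach space $\X$ need not admit norm-convergent subsequences is correct and relevant: the contingent cone in (\ref{eq:Tangchar}) demands strong convergence, so weak accumulation points do not suffice. The paper waves this point through; you do not. Your proposed fallback of restricting to finite-dimensional $\X$ is the honest resolution, and the alternative you sketch (promoting the approximate tangents via (\ref{eq:Tangchar}) and closedness of the tangent cone) does not by itself supply the missing compactness---one still needs a limit point to feed into that characterization. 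In short: your argument matches the paper's, and the gap you isolate is real and is precisely the step the paper declares unproblematic without further justification.
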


\begin{proof}
The thesis follows from the Aubin continuity of $\Solv$ in the light
of \cite[Exercise 9.49]{RocWet98}. Indeed, a perusal of the argument
suggested there certifies that the finite-dimensional setting does
not affect the reasoning.
\end{proof}

Other useful properties of $\Gder{\Solv}{\bar p,\bar x}$ can be
established in the presence of a specific geometric property of
$F$, which appeared in connection with the study of set-valued
already in \cite{Cast99}.

\begin{definition}[$C$-concavity]
A set-valued mapping $\Phi:\X\rightrightarrows\Y$ between normed
vector spaces is called {\it $C$-concave} in $\X$, where $C$ is a
convex cone in $\Y$, if it holds
$$
  \Phi(tx_1+(1-t)x_2)\subseteq t\Phi(x_1)+(1-t)\Phi(x_2)+C,
  \quad\forall x_1,\, x_2\in\X,\ \forall t\in [0,1].
$$
\end{definition}

A remarkable class of $C$-concave set-valued mappings emerging in the
context of robust convex optimization is singled out below.

\begin{example}
Let $f:\X\times\Omega\longrightarrow\Y$ be a given mapping, with $\Omega
\ne\varnothing$, and let $C\subseteq\Y$ be a convex cone. If each
mapping $f(\cdot,\omega):\X\longrightarrow\Y$ is $C$-concave in $\X$,
i.e.
$$
  f(tx_1+(1-t)x_2,\omega)-tf(x_1,\omega)-(1-t)f(x_2,\omega)\in C,
$$
for every $\omega\in\Omega$, then the set-valued mapping $\Phi_f:\X\rightrightarrows\Y$
defined by
$$
 \Phi_f(x)=\{y=f(x,\omega)\ |\ \omega\in\Omega\}=f(x,\Omega)
$$
turns out to be $C$-concave in $\X$. Indeed, taken any pair $x_1,\, x_2\in\X$
and $t\in [0,1]$, if $y$ is an arbitrary element of $\Phi_f(tx_1+(1-t)x_2)$,
then there exists $\omega\in\Omega$ such that
\begin{eqnarray*}
  y &=& f(tx_1+(1-t)x_2,\omega)\in tf(x_1,\omega)+(1-t)f(x_2,\omega)+C \\
  &\subseteq & t\Phi_f(x_1)+(1-t)\Phi_f(x_2)+C,
\end{eqnarray*}
which shows that
$$
  \Phi_f(tx_1+(1-t)x_2)\subseteq t\Phi_f(x_1)+(1-t)\Phi_f(x_2)+C.
$$
It is worth noting that if, in particular, $f:\X\times\Omega\longrightarrow\R^m$
is given by $f=(f_1,\dots,f_m)$, where $f_i(\cdot,\omega):\X\longrightarrow\R$
is concave for every $i=1,\dots,m$, and $\omega\in\Omega$,
then $f$ turns out to be $\R^m_+$-concave.
Thus, the set-valued inclusion $\Phi_f(x)\subseteq\R^m_+$ expresses in this case
the robust fulfilment of the convex inequality system
$$
  \left\{\begin{array}{c}
   -f_1(x,\omega)\le 0  \\
    \vdots \\
   -f_m(x,\omega)\le 0,
  \end{array}  \right.
$$
which typically defines the feasible region in robust convex optimization
(see \cite{BenNem98}).
\end{example}

\begin{remark}
The notion of $C$-concavity for set-valued mappings is evidently a generalization
of that of concavity, as presented in \cite[Definition 2.3]{Uder20}.
Consequently, several further examples of $C$-concave set-valued mappings,
including among other the class of fans introduced by Ioffe (see \cite{Ioff81}),
can be found therein.
\end{remark}

The $C$-concavity of $F$ in $\Pb\times\X$ yields the following important
geometric property of $\Solv$.

\begin{proposition}[Convexity of $\Solv$]    \label{pro:convSolv}
With reference to a parameterized set-valued inclusion $(\SVI_p)$,
suppose that $F:\Pb\times\X\rightrightarrows\Y$ is $C$-concave
on $\Pb\times\X$. Then, $\Solv:\Pb\rightrightarrows\X$ is a convex
set-valued mapping.
\end{proposition}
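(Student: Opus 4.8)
The plan is to show that $\graph\Solv$ is a convex subset of $\Pb\times\X$, which is equivalent to convexity of the set-valued mapping $\Solv$. To this end, I would take two points of the graph, namely $(p_1,x_1),\, (p_2,x_2)\in\graph\Solv$ (so that $F(p_i,x_i)\subseteq C$ for $i=1,2$), fix $t\in[0,1]$, and verify that their convex combination $(p_t,x_t):=(tp_1+(1-t)p_2,\, tx_1+(1-t)x_2)$ again lies in $\graph\Solv$, i.e. that $F(p_t,x_t)\subseteq C$.

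The key step is to feed the two points into the $C$-concavity inequality for $F$ on the product space $\Pb\times\X$. By definition of $C$-concavity applied to the pair $(p_1,x_1)$, $(p_2,x_2)$, one has
$$
  F(p_t,x_t)=F\big(t(p_1,x_1)+(1-t)(p_2,x_2)\big)\subseteq tF(p_1,x_1)+(1-t)F(p_2,x_2)+C.
$$
Now I would use the two feasibility assumptions $F(p_1,x_1)\subseteq C$ and $F(p_2,x_2)\subseteq C$, together with the fact that $C$ is a convex cone: convexity and the cone property give $tC\subseteq C$, $(1-t)C\subseteq C$, and $C+C\subseteq C$, so that
$$
  tF(p_1,x_1)+(1-t)F(p_2,x_2)+C\subseteq tC+(1-t)C+C\subseteq C.
$$
Chaining the two inclusions yields $F(p_t,x_t)\subseteq C$, which is exactly $x_t\in\Solv(p_t)$, i.e. $(p_t,x_t)\in\graph\Solv$.

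There is no serious obstacle here: the argument is a direct combination of the defining inclusion of $C$-concavity with the algebraic stability of the cone $C$ under convex combinations and Minkowski sums. The only point worth a word of care is that $C$-concavity must be invoked for $F$ as a mapping on the single space $\Pb\times\X$ (with the max-norm structure fixed at the start of the section), rather than separately in each variable; the hypothesis is stated in precisely this form. Since $(p_1,x_1)$, $(p_2,x_2)$ and $t\in[0,1]$ were arbitrary, convexity of $\graph\Solv$ follows, and hence $\Solv$ is a convex set-valued mapping.
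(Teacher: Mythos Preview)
Your proof is correct and follows essentially the same route as the paper: both arguments apply the $C$-concavity inclusion to a pair $(p_1,x_1),(p_2,x_2)\in\graph\Solv$ and then absorb $tC+(1-t)C+C$ into $C$ via the convex cone properties. The only cosmetic difference is that the paper phrases the conclusion as $t\Solv(p_1)+(1-t)\Solv(p_2)\subseteq\Solv(tp_1+(1-t)p_2)$ rather than as convexity of $\graph\Solv$, which are equivalent formulations.
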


\begin{proof}
Taken arbitrary $p_1,\, p_2\in\dom\Solv$ and $x_1,\, x_2\in\X$,
with $x_i\in\Solv(p_i)$, $i=1,\, 2$, by virtue of the $C$-concavity
of $F$ one has
\begin{eqnarray*}
   F(t(p_1,x_1)+(1-t)(p_2,x_2)) &\subseteq & tF(p_1,x_1)+(1-t)F(p_2,x_2)+C \\
   &\subseteq & tC+(1-t)C+C \subseteq C,\quad\forall t\in [0,1].
\end{eqnarray*}
This inclusion shows that
$$
  tx_1+(1-t)x_2\in\Solv(tp_1+(1-t)p_2),\quad\forall t\in [0,1].
$$
By arbitrariness of $x_i\in\Solv(p_i)$, from the last inclusion one
can deduce that
$$
  t\Solv(p_1)+(1-t)\Solv(p_2)\subseteq\Solv(tp_1+(1-t)p_2),
  \quad\forall t\in [0,1].
$$
thereby completing the proof.
\end{proof}

As one expects, the convexity of $\Solv$, that is the convexity of
its graph, induces a similar geometric property in its graphical
approximation, as stated next.

\begin{corollary}[Sublinearity of $\Gder{\Solv}{\bar p,\bar x}$]
With reference to a parameterized set-valued inclusion $(\SVI_p)$,
let $(\bar p,\bar x)\in\graph\Solv$. If $F:\Pb\times\X\rightrightarrows\Y$
is $C$-concave on $\Pb\times\X$, then $\Gder{\Solv}{\bar p,\bar x}:\Pb
\rightrightarrows\X$ is a closed sublinear set-valued mapping
(a.k.a. convex process).

If, in addition, $\graph\Solv$ is closed, then $\Solv$ is protodifferentiable
at $(\bar p,\bar x)$.
\end{corollary}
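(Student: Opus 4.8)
The plan is to deduce everything from the convexity of $\graph\Solv$, which is furnished by Proposition \ref{pro:convSolv}: the $C$-concavity of $F$ makes $\Solv$ a convex set-valued mapping, so $\graph\Solv$ is a convex subset of $\Pb\times\X$. Recalling that the graphical derivative is defined by $\graph\Gder{\Solv}{\bar p,\bar x}=\Tang{\graph\Solv}{(\bar p,\bar x)}$, the first assertion becomes a statement about the contingent cone to a convex set. First I would invoke the convex representation (\ref{eq:Tangrepconvex}) to write $\Tang{\graph\Solv}{(\bar p,\bar x)}=\cl[\cone(\graph\Solv-(\bar p,\bar x))]$. Since $\graph\Solv-(\bar p,\bar x)$ is convex, its conic hull is a convex cone, and the closure of a convex cone is again a closed convex cone. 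Because a set-valued mapping is a closed sublinear process exactly when its graph is a closed convex cone, this already shows that $\Gder{\Solv}{\bar p,\bar x}$ is a closed convex process, in accordance with the fact---already noted after (\ref{in:defgrphder})---that graphical derivatives are positively homogeneous.

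For the protodifferentiability assertion, I would reduce the claim to the geometric derivability of $\graph\Solv$ at $(\bar p,\bar x)$: unravelling the correspondence $w=(z,v)$ between tangent vectors and pairs, the defining condition of protodifferentiability is precisely that every $w\in\Tang{\graph\Solv}{(\bar p,\bar x)}$ be \emph{derivable}, i.e. that for every sequence $t_n\downarrow 0$ there exist $w_n\to w$ with $(\bar p,\bar x)+t_nw_n\in\graph\Solv$, which is exactly inclusion (\ref{in:defgrphder}). The key mechanism is an elementary convex-combination observation: if $w\in\cone(\graph\Solv-(\bar p,\bar x))$, say $w=\lambda[(p,x)-(\bar p,\bar x)]$ with $(p,x)\in\graph\Solv$ and $\lambda\ge0$, then $(\bar p,\bar x)+tw=(1-t\lambda)(\bar p,\bar x)+t\lambda(p,x)$ lies in $\graph\Solv$ for all $t\in[0,1/\lambda]$ by convexity; hence such a $w$ is derivable with the \emph{constant} choice $w_n\equiv w$ (adjusting the finitely many initial indices for which $t_n>1/\lambda$).

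It remains to upgrade derivability from the cone $\cone(\graph\Solv-(\bar p,\bar x))$ to its closure, which is the technical heart of the matter. Given a general $w\in\Tang{\graph\Solv}{(\bar p,\bar x)}$, I would fix $u_k\to w$ with $u_k\in\cone(\graph\Solv-(\bar p,\bar x))$ and, for each $k$, a threshold $\tau_k>0$ with $(\bar p,\bar x)+tu_k\in\graph\Solv$ for $t\in(0,\tau_k]$. For a prescribed $t_n\downarrow 0$ a diagonal selection then produces indices $k(n)\to\infty$ with $t_n\le\tau_{k(n)}$, so that $w_n:=u_{k(n)}\to w$ and $(\bar p,\bar x)+t_nw_n\in\graph\Solv$ for every $n$. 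Since $\graph\Solv$ is a closed convex set, this realises the standard assertion that closed convex sets are geometrically derivable, and by the definition recalled before (\ref{in:defgrphder}) it amounts exactly to the protodifferentiability of $\Solv$ at $(\bar p,\bar x)$. The main obstacle I anticipate is precisely this passage to the closure: matching an arbitrary, externally prescribed null sequence $t_n$ with the $k$-dependent thresholds $\tau_k$ while preserving $w_n\to w$. The convex-combination step, by contrast, is routine.
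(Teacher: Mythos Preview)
Your argument for the first assertion is correct and coincides with the paper's: both invoke Proposition~\ref{pro:convSolv} to obtain convexity of $\graph\Solv$ and then observe that the contingent cone to a convex set is a closed convex cone, so that $\Gder{\Solv}{\bar p,\bar x}$ is a closed convex process.

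For the protodifferentiability assertion you take a genuinely different route. The paper argues by citation, chaining together results from \cite{RocWet98} and \cite{DonRoc14}: closedness of $\graph\Solv$ gives outer semicontinuity of $\Solv$, which together with convexity yields graph regularity, which in turn implies protodifferentiability. You instead prove directly that every tangent vector to the convex set $\graph\Solv$ is derivable, via the convex-combination identity $(\bar p,\bar x)+tw=(1-t\lambda)(\bar p,\bar x)+t\lambda(p,x)$ for $w\in\cone(\graph\Solv-(\bar p,\bar x))$, followed by a diagonal selection to reach vectors in the closure. Your outline is sound; the diagonalisation you flag as the obstacle goes through as you sketch it (for the finitely many initial indices $n$ at which no threshold $\tau_k$ is yet available one may simply take $w_n=\nullv$, since $(\bar p,\bar x)\in\graph\Solv$, without spoiling $w_n\to w$). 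The paper's route is shorter but leans on references formulated in $\R^n$, whose extension to the present normed-space setting is taken for granted; your argument is self-contained and works directly in any normed vector space. Incidentally, your derivability argument uses only the convexity of $\graph\Solv$, not its closedness, so your approach actually delivers the second assertion without the additional hypothesis.
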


\begin{proof}
Since by Proposition \ref{pro:convSolv} $\graph\Solv$ is convex, so is
$\Tang{\graph\Solv}{(\bar p,\bar x)}$. Having a closed, convex cone as
a graph, the set-valued mapping $\Gder{\Solv}{\bar p,\bar x}:\Pb
\rightrightarrows\X$ must be closed and sublinear.

As for the second assertion, it is useful to recall that a sufficient
condition for protodifferentiability is graph regularity
(see \cite[Proposition 8.41]{RocWet98}), which in turn
comes here as a consequence of the convexity of the graph along with
the outer semicontinuity (see \cite[Example 8.39]{RocWet98}),
the latter property being guaranteed by the fact that the graph
of $\Solv$ is closed (see \cite[Theorem 3B.2(c)]{DonRoc14}).
\end{proof}

Convexity interacts also with Aubin continuity yielding an enhanced
Lipschitzian behaviour of $\Solv$, according to the assertion below.

\begin{corollary}[Lipschitz continuity of $\Solv$ under truncation]
With reference to a parameterized set-valued inclusion $(\SVI_p)$,
let $(\bar p,\bar x)\in\graph\Solv$. Suppose that all the hypotheses
(i) -- (vi) of Proposition \ref{pro:vecterboAub} are fulfilled.
If $F:\Pb\times\X\rightrightarrows\Y$ is $C$-concave on $\Pb\times\X$,
then $\Solv$ has a Lipschitz continuous graphical localization
(not necessarily single-valued) around $(\bar p,\bar x)\in\graph\Solv$,
i.e. there exists neighbourhoods $V$ of $\bar p$ and $U$ of $\bar x$
such that the truncated mapping $p\leadsto \Solv(p)\cap U$ is Lipschitz
continuous on $V$.
\end{corollary}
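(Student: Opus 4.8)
The plan is to merge the two structural facts furnished by the preceding results. On one hand, hypotheses (i)--(vi) of Proposition \ref{pro:vecterboAub} make $\Solv$ Aubin continuous at $(\bar p,\bar x)$, so there are positive constants $\delta,\, r,\, \kappa$ with
\[
  \dist{x}{\Solv(p_1)}\le\kappa\|p_1-p_2\|,\quad\forall p_1,\, p_2\in\ball{\bar p}{\delta},\ \forall x\in\Solv(p_2)\cap\ball{\bar x}{r}.
\]
On the other hand, the $C$-concavity of $F$ makes $\Solv$ convex by Proposition \ref{pro:convSolv}, so that, taking also hypothesis (iii) and Remark \ref{rem:lscnu} into account, each value $\Solv(p)$ with $p$ near $\bar p$ is a closed, convex subset of $\X$. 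The Aubin estimate alone only bounds the excess of a truncation of $\Solv(p_2)$ over the \emph{whole} set $\Solv(p_1)$; the target is instead a Pompeiu--Hausdorff estimate between the truncations $\Solv(p_1)\cap U$ and $\Solv(p_2)\cap U$ relative to a \emph{common} neighbourhood $U$ of $\bar x$. Bridging this gap is exactly where convexity must enter.

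First I would record the Lipschitz lower semicontinuity already implicit in the displayed estimate: choosing $p_2=\bar p$ and $x=\bar x\in\Solv(\bar p)\cap\ball{\bar x}{r}$ gives $\dist{\bar x}{\Solv(p_1)}\le\kappa\|p_1-\bar p\|$, whence for each $p_1\in\ball{\bar p}{\delta}$ and each $\epsilon>0$ there is an interior anchor $\bar x_{p_1}\in\Solv(p_1)$ with $\|\bar x_{p_1}-\bar x\|\le\kappa\|p_1-\bar p\|+\epsilon$. Next I would fix a truncation radius $\rho\in(0,r)$, set $U=\ball{\bar x}{\rho}$, and shrink the parameter neighbourhood to $V=\ball{\bar p}{\delta'}$ with $\delta'$ so small that $\kappa\delta'<\rho$, which keeps every anchor $\bar x_{p_1}$ at positive depth inside $U$.

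The heart of the argument is the excess estimate for the truncations. Given $p_1,\, p_2\in V$ and $x\in\Solv(p_2)\cap U$, the Aubin estimate yields $x'\in\Solv(p_1)$ with $\|x-x'\|\le\kappa\|p_1-p_2\|+\epsilon$. If $x'\in U$, I keep $x''=x'$; otherwise $x'$ overshoots the boundary of $U$ by only $O(\|p_1-p_2\|)$, and, since $\Solv(p_1)$ is convex, the segment $[x',\bar x_{p_1}]$ is entirely contained in $\Solv(p_1)$. I would then take $x''=(1-s)x'+s\bar x_{p_1}$ for the smallest $s\in[0,1]$ with $x''\in U$ (well defined, since $\|(1-s)x'+s\bar x_{p_1}-\bar x\|$ is convex in $s$ and below $\rho$ at $s=1$). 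A triangle-inequality computation, exploiting that $x'$ is close to the boundary while $\bar x_{p_1}$ sits at positive depth, shows $s$ is of order $\|p_1-p_2\|/\rho$ and hence $\|x-x''\|$ is bounded by a fixed multiple of $\|p_1-p_2\|+\epsilon$, with $x''\in\Solv(p_1)\cap U$.

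Letting $\epsilon\downarrow 0$ delivers $\exc{\Solv(p_2)\cap U}{\Solv(p_1)\cap U}\le\kappa'\|p_1-p_2\|$ for a suitable constant $\kappa'$; exchanging the roles of $p_1$ and $p_2$ gives the reverse excess, so that $\haus{\Solv(p_1)\cap U}{\Solv(p_2)\cap U}\le\kappa'\|p_1-p_2\|$ for all $p_1,\, p_2\in V$, which is precisely the asserted Lipschitz continuity of $p\leadsto\Solv(p)\cap U$ on $V$. I expect the retraction step to be the main obstacle: one must verify quantitatively that travelling along the segment from $x'$ towards the anchor re-enters $U$ after a parameter proportional to $\|p_1-p_2\|$, so that the displacement it adds to $\|x-x'\|$ stays of order $\|p_1-p_2\|$ and the Lipschitz rate survives. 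This rests on the two facts that $x'$ misses $U$ by at most $O(\|p_1-p_2\|)$ and that $\bar x_{p_1}$ lies at a positive distance inside $U$, the latter being guaranteed precisely by the choice $\kappa\delta'<\rho$.
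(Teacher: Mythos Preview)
Your argument is correct in outline and, once the retraction estimate is carried through (which it can be, exactly along the lines you indicate), yields the desired Hausdorff--Lipschitz bound for the truncated mapping.

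The paper, however, takes a much shorter route: it simply observes that $\Solv$ is Aubin continuous at $(\bar p,\bar x)$ by Proposition~\ref{pro:vecterboAub} and that $\Solv$ takes convex values by Proposition~\ref{pro:convSolv}, and then invokes \cite[Theorem~3E.3]{DonRoc14}, which asserts precisely that a convex-valued set-valued mapping with the Aubin property admits a Lipschitz continuous graphical localization (the paper also remarks that the cited proof carries over verbatim to normed spaces). What you have written is, in effect, a self-contained reconstruction of the proof of that theorem: your convex ``retraction to the anchor'' is the geometric mechanism underlying \cite[Theorem~3E.3]{DonRoc14}. So your approach is not genuinely different in substance, only in that you unpack the black box rather than cite it; this makes your argument longer but independent of the external reference, whereas the paper trades brevity for reliance on a standard implicit-function-type result.
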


\begin{proof}
Observe that, in the light of Proposition \ref{pro:convSolv}, $\Solv$
is convex. Consequently, it takes convex values. Since $\Solv$ is also
Aubin continuous at $(\bar p,\bar x)$ by virtue of the hypotheses taken,
the convexity of its images allows one to apply \cite[Theorem 3E.3]{DonRoc14},
after having noticed that this result can be extended to normed vector
spaces with the same proof.
\end{proof}

In the absence of convexity of $\Solv$, the following formulae provide
inner and outer approximations of $\Gder{\Solv}{\bar p,\bar x}$ in
terms of proper prederivatives.

\begin{theorem}[Inner approximation of $\Gder{\Solv}{\bar p,\bar x}$]   \label{thm:inapproxsol}
Given a parameterized set-valued inclusion $(\SVI_p)$, let $(\bar p,\bar x)
\in \Pb\times \X$. Suppose that:

(i) $\bar x\in\Solv(\bar p)$;

(ii) the set-valued mapping $p\leadsto F(p,\bar x)$ is Hausdorff $C$-u.s.c.
at $\bar p$;

(iii) there exists $\delta_1>0$ such that for every $p\in
\ball{\bar p}{\delta_1}$ each set-valued mapping $x\leadsto F(p,x)$
is l.s.c. on $\X$;

(iv) there exists $\delta_2>0$ such that, for every $p\in
\ball{\bar p}{\delta_2}$, each set-valued mapping $x\leadsto F(p,x)$
admits an outer prederivative $H_{F(p,\cdot)}(x;\cdot):\X\rightrightarrows\Y$
at each point $x\in \ball{\bar p}{\delta_2}$, such that $\sigma_H(\bar p,\bar x)>0$;

(v) $F$ admits an outer prederivative $H_F((\bar p,\bar x);\cdot):\Pb\times\X
\rightrightarrows\Y$ at $(\bar p,\bar x)$.

\noindent Then, the following approximation holds
\begin{equation}    \label{in:inapproxGderSol}
    \Gder{\Solv}{\bar p,\bar x}(p)\supseteq H_F^{+1}((\bar p,\bar x);(p,\cdot))(C),
    \quad\forall p\in\Pb.
\end{equation}
\end{theorem}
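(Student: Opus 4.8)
The plan is to combine the error bound furnished by Proposition \ref{pro:vecterboAub} with the variational characterization of the contingent cone recorded in $(\ref{eq:Tangchar})$. Fix $p\in\Pb$ and take any $x\in\X$ with $\Preder{F}{(\bar p,\bar x)}{(p,x)}\subseteq C$; the goal is to show that $(p,x)\in\Tang{\graph\Solv}{(\bar p,\bar x)}$, i.e. that
$$
  \liminf_{t\downarrow 0}{\dist{(\bar p,\bar x)+t(p,x)}{\graph\Solv}\over t}=0.
$$
Hypotheses (i)--(iv) are precisely those guaranteeing the first conclusion of Proposition \ref{pro:vecterboAub}, so there are positive $\eta,\zeta$ for which the error bound $(\ref{in:parerboFC})$ holds with $\sigma_\nabla$ replaced by $\sigma_H(\bar p,\bar x)>0$.

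First I would reduce the graph-distance to a distance in $\X$ by slicing at the fixed parameter value $\bar p+tp$: for every $x'\in\Solv(\bar p+tp)$ the point $(\bar p+tp,x')$ lies in $\graph\Solv$, and by the max-norm structure of $\Pb\times\X$ the distance from $(\bar p+tp,\bar x+tx)$ to it equals $\|\bar x+tx-x'\|$, whence
$$
  \dist{(\bar p,\bar x)+t(p,x)}{\graph\Solv}\le\dist{\bar x+tx}{\Solv(\bar p+tp)}.
$$
For $t$ small enough that $(\bar p+tp,\bar x+tx)\in\ball{\bar p}{\zeta}\times\ball{\bar x}{\eta}$, the error bound then yields
$$
  \dist{\bar x+tx}{\Solv(\bar p+tp)}\le{\nu_{F,C}(\bar p+tp,\bar x+tx)\over\sigma_H(\bar p,\bar x)}={\exc{F(\bar p+tp,\bar x+tx)}{C}\over\sigma_H(\bar p,\bar x)}.
$$

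The heart of the argument is to show this excess is of order $o(t)$. Fixing $\epsilon>0$ and using the outer prederivative of $F$ at $(\bar p,\bar x)$ (hypothesis (v)) with increment $w=t(p,x)$, for all sufficiently small $t>0$ one has
$$
  F(\bar p+tp,\bar x+tx)\subseteq F(\bar p,\bar x)+\Preder{F}{(\bar p,\bar x)}{t(p,x)}+\epsilon t\|(p,x)\|\Uball.
$$
Now I would exploit three structural facts: positive homogeneity gives $\Preder{F}{(\bar p,\bar x)}{t(p,x)}=t\Preder{F}{(\bar p,\bar x)}{(p,x)}\subseteq tC=C$, since $C$ is a cone and $\Preder{F}{(\bar p,\bar x)}{(p,x)}\subseteq C$ by the choice of $x$; membership $\bar x\in\Solv(\bar p)$ gives $F(\bar p,\bar x)\subseteq C$; and $C+C=C$ by convexity. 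Hence $F(\bar p+tp,\bar x+tx)\subseteq C+\epsilon t\|(p,x)\|\Uball$, so by Remark \ref{rem:provectexc}(ii) and (iii),
$$
  \exc{F(\bar p+tp,\bar x+tx)}{C}\le\exc{\epsilon t\|(p,x)\|\Uball}{C}\le\epsilon t\|(p,x)\|.
$$

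Chaining the three displayed estimates and dividing by $t$, I obtain for all small $t>0$ that ${\dist{(\bar p,\bar x)+t(p,x)}{\graph\Solv}/ t}\le{\epsilon\|(p,x)\|/\sigma_H(\bar p,\bar x)}$, so the $\liminf$ as $t\downarrow 0$ is at most $\epsilon\|(p,x)\|/\sigma_H(\bar p,\bar x)$; letting $\epsilon\downarrow 0$ forces it to vanish, which by $(\ref{eq:Tangchar})$ places $(p,x)$ in $\Tang{\graph\Solv}{(\bar p,\bar x)}$ and proves $(\ref{in:inapproxGderSol})$ (the case $(p,x)=(\nullv,\nullv)$ being trivial, as $\nullv\in\Gder{\Solv}{\bar p,\bar x}(\nullv)$ always). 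The only genuinely delicate point is the step absorbing the first-order term $t\,\Preder{F}{(\bar p,\bar x)}{(p,x)}$ into $C$: everything downstream is routine, but this is exactly where the hypothesis $\Preder{F}{(\bar p,\bar x)}{(p,x)}\subseteq C$ and the conicity of $C$ are indispensable, and where positive homogeneity of the prederivative must be invoked to keep the term of first order in $t$.
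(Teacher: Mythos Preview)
Your proof is correct and follows essentially the same approach as the paper: both combine the error bound of Proposition \ref{pro:vecterboAub} with the variational characterization $(\ref{eq:Tangchar})$ of the contingent cone, reducing the graph-distance to $\dist{\bar x+tx}{\Solv(\bar p+tp)}$ and then estimating $\nu_{F,C}(\bar p+tp,\bar x+tx)$ via the outer prederivative and the cone properties of $C$. The only cosmetic differences are that the paper normalizes $(p,v)$ to the unit ball and pre-scales $\epsilon$ by $\sigma_H(\bar p,\bar x)$, whereas you carry the factor $\|(p,x)\|/\sigma_H(\bar p,\bar x)$ through and let $\epsilon\downarrow 0$ at the end; and that you absorb $F(\bar p,\bar x)\subseteq C$ directly via $C+C=C$, while the paper routes this through Remark \ref{rem:provectexc}(i),(ii),(iii) applied to $\exc{F(\bar p,\bar x)+C+r\Uball}{C}$.
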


\begin{proof}
Take an arbitrary $v\in H_F^{+1}((\bar p,\bar x);(p,\cdot))(C)$.
Observe first of all that, since $\Gder{\Solv}{\bar p,\bar x}$ and
$H_F((\bar p,\bar x);\cdot)$ are both p.h. set-valued mappings,
it suffices to prove the validity of inclusion
$(\ref{in:inapproxGderSol})$ in the case $(p,v)\in\Uball\times\Uball$.
That said, in order to prove that $v\in \Gder{\Solv}{\bar p,\bar x}(p)$,
one has to show that $(p,v)\in\graph
\Gder{\Solv}{\bar p,\bar x}=\Tang{\graph\Solv}{(\bar p,\bar x)}$. On account of the
characterization recalled in $(\ref{eq:Tangchar})$, this can be done by showing that
$$
  \liminf_{t\downarrow 0}{\dist{(\bar p,\bar x)+t(p,v)}{\graph\Solv}\over t}=0.
$$
The last equality means that for every $\tau>0$ and $\epsilon>0$ there must
exist $t\in (0,\tau)$ such that
\begin{equation}   \label{in:distqed}
   {\dist{(\bar p,\bar x)+t(p,v)}{\graph\Solv}\over t}\le\epsilon.
\end{equation}
Fix positive $\epsilon$ and $\tau$. Since under the current hypotheses it is
possible to apply Proposition \ref{pro:vecterboAub}, one gets the existence of
$\zeta$ and $\eta$ such that the estimate
\begin{equation}    \label{in:erbobarpbarx}
     \dist{\bar x+tv}{\Solv(\bar p+tp)}\le{\nu_{F,C}(\bar p+tp,\bar x+tv)\over
     \sigma_H(\bar p,\bar x)},\quad \forall t\in (0,\min\{\zeta,\eta\})
\end{equation}
holds true.
By hypothesis (v), there exists $0<\delta_\epsilon<\min\{\zeta,\eta,\tau\}$ such that
$$
   F((\bar p,\bar x)+t(p,v)) \subseteq  F(\bar p,\bar x)+tH_F((\bar p,\bar x);
   (p,v))+\epsilon t\sigma_H(\bar p,\bar x)\Uball,\quad\forall t\in (0,\delta_\epsilon).
$$
As it is $v\in H_F((\bar p,\bar x);(p,v))\subseteq C$, from the above inclusion one
obtains
$$
   F((\bar p,\bar x)+t(p,v)) \subseteq F(\bar p,\bar x)+C+
   \epsilon t\sigma_H(\bar p,\bar x)\Uball,
   \quad\forall t\in (0,\delta_\epsilon).
$$
Thus, when passing to the excess, by virtue of what observed in Remark \ref{rem:provectexc}(i),
(ii) and (iii), one finds
\begin{eqnarray}    \label{in:nuepssigma}
   \nu_{F,C}((\bar p,\bar x)+t(p,v)) &\le & \exc{F(\bar p,\bar x)+C+  \nonumber
   \epsilon t\sigma_H(\bar p,\bar x)\Uball}{C}  \\
   &\le & \nu_{F,C}(\bar p,\bar x)+\epsilon t\sigma_H(\bar p,\bar x),\\
   &=& \epsilon t\sigma_H(\bar p,\bar x),\quad\forall t\in (0,\delta_\epsilon). \nonumber
\end{eqnarray}
Now, it is proper to observe that
\begin{eqnarray*}
  \dist{(\bar p,\bar x)+t(p,v)}{\graph\Solv} &=& \inf_{(q,w)\in\graph\Solv}
  \|(\bar p,\bar x)+t(p,v)-(q,w)\|   \\
   &\le& \inf_{w\in\Solv(\bar p+tp)}\|(\bar p,\bar x)+t(p,v)-(\bar p+tp,w)\|    \\
   &=& \inf_{w\in\Solv(\bar p+tp)}\|\bar x+tv-w\|=\dist{\bar x+tv}{\Solv(\bar p+tp)}.
\end{eqnarray*}
Therefore, by combining the last inequality chain with inequalities $(\ref{in:erbobarpbarx})$
and $(\ref{in:nuepssigma})$, one obtains
\begin{eqnarray*}
   \dist{(\bar p,\bar x)+t(p,v)}{\graph\Solv} \le {\nu_{F,C}(\bar p+tp,\bar x+tv)\over
     \sigma_H(\bar p,\bar x)}\le\epsilon t,
     \quad\forall t\in (0,\delta_\epsilon).
\end{eqnarray*}
As it is $\delta_\epsilon<\tau$, from the last inequality one can deduce the existence
of $t\in (0,\tau)$, for which inequality $(\ref{in:distqed})$ is fulfilled, thereby
completing the proof.
\end{proof}

\begin{remark}    \label{rem:inapproxGderSolv}
(i) The reader should notice that hypotheses (iv) and (v) of Theorem \ref{thm:inapproxsol}
speak about different mathematical objects. Indeed, the outer prederivatives mentioned
in hypothesis (iv) provide partial first-order approximations, each for the mapping
$F(p,\cdot)$ (depending only on $x$) near $\bar x$, with $p\in\ball{\bar p}{\delta_2}$.
In contrast, $H_F((\bar p,\bar x);\cdot)$ provides a first-order joint approximation of $F$
as a multifunction of both the variables $p$ and $x$.

(ii) It is readily seen that each set $H_F^{+1}((\bar p,\bar x);(p,\cdot))(C)$ is
a cone in $\X$, being the upper inverse image of a cone through a p.h. set-valued
mapping. In particular, whenever $H_F((\bar p,\bar x);\cdot)$ is $C$-concave, it can
be shown to be a convex cone. By virtue of these features, such a set is expected
to be more easily computable than the set $\Gder{\Solv}{\bar p,\bar x}(p)$, in the
spirit of implicit function theorems.
\end{remark}

\begin{theorem}[Outer approximation of $\Gder{\Solv}{\bar p,\bar x}$]   \label{thm:outapproxsol}
Given a parameterized set-valued inclusion $(\SVI_p)$, let $(\bar p,\bar x)
\in \Pb\times \X$, with $\bar x\in\Solv(\bar p)$.
Suppose that $F$ admits an inner prederivative
$H_F((\bar p,\bar x);\cdot):\Pb\times\X\rightrightarrows\Y$ at $(\bar p,\bar x)$,
which is l.s.c. on $\Pb\times\X$. Then, the following approximation holds
\begin{equation}     \label{in:outapproxsol}
   \Gder{\Solv}{\bar p,\bar x}(p)\subseteq \bigcap_{y\in F(\bar p,\bar x)}
   H_F^{+1}((\bar p,\bar x);(p,\cdot))(\Tang{C}{y}),\quad\forall p\in\Pb.
\end{equation}
\end{theorem}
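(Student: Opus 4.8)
The plan is to take an arbitrary $v \in \Gder{\Solv}{\bar p,\bar x}(p)$ and show it lies in the intersection on the right-hand side of $(\ref{in:outapproxsol})$; that is, for each fixed $y \in F(\bar p,\bar x)$ I must verify $v \in H_F^{+1}((\bar p,\bar x);(p,\cdot))(\Tang{C}{y})$, which unwinds to the requirement $H_F((\bar p,\bar x);(p,v)) \subseteq \Tang{C}{y}$. Since $v \in \Gder{\Solv}{\bar p,\bar x}(p) = \Tang{\graph\Solv}{(\bar p,\bar x)}$, I may invoke the sequential description $(\ref{in:defgrphder})$: there exist $p_n \to p$, $v_n \to v$, and $t_n \downarrow 0$ such that $\bar x + t_n v_n \in \Solv(\bar p + t_n p_n)$ for all $n$, which by definition of $\Solv$ means $F(\bar p + t_n p_n, \bar x + t_n v_n) \subseteq C$ for every $n$.

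Next I would exploit the inner prederivative hypothesis. Being an inner prederivative (in the sense of \cite{Pang11}, the second definition recalled in the excerpt), $H_F((\bar p,\bar x);\cdot)$ satisfies, for every $\epsilon > 0$ and all small displacements $w$,
\begin{equation*}
  F(\bar p,\bar x) + H_F((\bar p,\bar x);w) \subseteq F((\bar p,\bar x)+w) + \epsilon\|w\|\Uball.
\end{equation*}
Applying this at $w = (t_n p_n, t_n v_n)$ and using positive homogeneity to write $H_F((\bar p,\bar x);(t_n p_n, t_n v_n)) = t_n H_F((\bar p,\bar x);(p_n,v_n))$, the chosen $y \in F(\bar p,\bar x)$ combines with any $z \in H_F((\bar p,\bar x);(p_n,v_n))$ to give a point $y + t_n z$ lying within $\epsilon t_n \|(p_n,v_n)\|$ of the set $F((\bar p,\bar x)+(t_n p_n,t_n v_n)) \subseteq C$. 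Dividing by $t_n$ shows that $y + t_n z$ sits at distance $O(\epsilon)$ from $C$, so $\dist{y + t_n z}{C}/t_n$ is small; this is precisely the quantity appearing in the variational characterization $(\ref{eq:Tangchar})$ of the contingent cone $\Tang{C}{y}$, and it would force $z \in \Tang{C}{y}$ in the limit.

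The main obstacle, and the reason the lower semicontinuity hypothesis on $H_F((\bar p,\bar x);\cdot)$ is imposed, is that the element $z$ I want to control belongs to $H_F((\bar p,\bar x);(p,v))$ at the limiting direction $(p,v)$, whereas the prederivative inclusion only gives me information at the approximating directions $(p_n,v_n)$. I would bridge this gap using l.s.c.: given $z \in H_F((\bar p,\bar x);(p,v))$, lower semicontinuity produces a sequence $z_n \in H_F((\bar p,\bar x);(p_n,v_n))$ with $z_n \to z$. Running the estimate above on each $z_n$ yields $\dist{y + t_n z_n}{C} \le \epsilon t_n \|(p_n,v_n)\| + o(t_n)$, and since $\|(p_n,v_n)\|$ is bounded and $z_n \to z$, one deduces $\liminf_{t \downarrow 0} \dist{y+tz}{C}/t = 0$ after a diagonal/subsequence argument, hence $z \in \Tang{C}{y}$ by $(\ref{eq:Tangchar})$. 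By arbitrariness of $z$ this gives $H_F((\bar p,\bar x);(p,v)) \subseteq \Tang{C}{y}$, i.e. $v \in H_F^{+1}((\bar p,\bar x);(p,\cdot))(\Tang{C}{y})$; since $y$ was arbitrary, $v$ lies in the intersection, completing the proof. The delicate point to handle carefully is the interplay between the two limits (the sequence $t_n \downarrow 0$ coming from the contingent cone and the l.s.c.-selection $z_n \to z$), which is exactly where the l.s.c. assumption earns its keep.
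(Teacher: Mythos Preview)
Your proposal is correct and shares the paper's overall architecture: extract sequences $(p_n,v_n,t_n)$ from the definition of $\Gder{\Solv}{\bar p,\bar x}(p)$, plug $t_n(p_n,v_n)$ into the inner-prederivative inclusion to get $y+t_nH_F((\bar p,\bar x);(p_n,v_n))\subseteq C+\epsilon t_n\Uball$, and then use l.s.c.\ of $H_F((\bar p,\bar x);\cdot)$ to pass from $(p_n,v_n)$ to $(p,v)$. The execution of that last passage differs. The paper works at the level of sets: invoking the convexity of $C$ and the representation $\Tang{C}{y}=\cl[\cone(C-y)]$ from $(\ref{eq:Tangrepconvex})$, it rewrites the inclusion as $H_F((\bar p,\bar x);(p_n,v_n))\subseteq\ball{\Tang{C}{y}}{\epsilon}$ for all large $n$, and then applies l.s.c.\ in its open-set form (were $H_F((\bar p,\bar x);(p,v))$ to meet the open complement of $\ball{\Tang{C}{y}}{\epsilon}$, so would nearby images) to transfer the inclusion to $(p,v)$. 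You instead work elementwise, using the sequential-selection form of l.s.c.\ to produce $z_n\to z$ with $z_n\in H_F((\bar p,\bar x);(p_n,v_n))$, and verify $z\in\Tang{C}{y}$ directly through the characterization $(\ref{eq:Tangchar})$. Your route is slightly more elementary and does not explicitly appeal to the convexity of $C$; the paper's route is more synthetic and handles the whole image set at once. One simplification: no diagonal argument is needed in your version, since $(t_n)$ is fixed and the bound $\dist{y+t_nz}{C}/t_n\le\epsilon\|(p_n,v_n)\|+\|z-z_n\|$ (valid for all large $n$ once $\epsilon$ is fixed) already gives $\liminf_n\dist{y+t_nz}{C}/t_n\le\epsilon\|(p,v)\|$, after which letting $\epsilon\downarrow 0$ finishes.
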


\begin{proof}
Fix $p\in\Pb$ and take an arbitrary $v\in\Gder{\Solv}{\bar p,\bar x}(p)$.
According to the definition of
graphical (contingent) derivative, there exist sequences $(p_n)_n$ in $\Pb$,
with $p_n\longrightarrow p$, $(v_n)_n$ in $\X$, with $v_n\longrightarrow v$,
and $(t_n)_n$ in $(0,+\infty)$, with $t_n\downarrow 0$, such that
$\bar x+t_nv_n\in\Solv(\bar p+t_np_n)$ for every $n\in\N$, which means
\begin{equation}    \label{in:pntnvnC}
    F((\bar p,\bar x)+t_n(p_n,v_n))\subseteq C,\quad\forall n\in\N.
\end{equation}
Fix $\epsilon>0$. Since $H_F((\bar p,\bar x);\cdot)$ is an inner prederivative
of $F$ at $(\bar p,\bar x)$, there exists $\delta_\epsilon>0$ such that
\begin{equation}      \label{in:inprederoutaprox}
  F(\bar p,\bar x)+tH_F((\bar p,\bar x);(q,w))\subseteq F(p,x)+\epsilon t\Uball,
  \quad\forall (q,w)\in\Uball\times\Uball,\ \forall t\in (0,\delta_\epsilon).
\end{equation}
Since, without loss of generality, it is possible to assume that
$(p_n,v_n)\in\Uball\times\Uball$ and $t_n\in (0,\delta_\epsilon)$, by
combining inclusions $(\ref{in:inprederoutaprox})$ and $(\ref{in:pntnvnC})$,
one obtains
$$
  F(\bar p,\bar x)+t_nH_F((\bar p,\bar x);(p_n,v_n))\subseteq C+
  \epsilon t_n\Uball,\quad\forall n\in\N.
$$
Let $y$ be an arbitrary element of $F(\bar p,\bar x)$. As it is $F(\bar p,\bar x)
\subseteq C$ by hypothesis, the last inclusion implies
$$
  H_F((\bar p,\bar x);(p_n,v_n))\subseteq {C-y\over t_n}+\epsilon\Uball
  \subseteq \cl[\cone(C-y)]+\epsilon\Uball,\quad\forall n\in\N.
$$
Since $C$ is convex, by virtue of the representation $(\ref{eq:Tangrepconvex})$
valid for its contingent cone, the last inclusion gives
\begin{equation}      \label{in:pnvnNTCy}
  H_F((\bar p,\bar x);(p_n,v_n))\subseteq \Tang{C}{y}+\epsilon\Uball
  \subseteq \ball{\Tang{C}{y}}{\epsilon},\quad\forall n\in\N.
\end{equation}
As the set-valued mapping $H_F((\bar p,\bar x);\cdot)$ is l.s.c.
at $(p,v)$ and $\ball{\Tang{C}{y}}{\epsilon}$ is a closed set, from
the last inclusion it follows
\begin{equation}    \label{in:pvTCy}
    H_F((\bar p,\bar x);(p,v))\subseteq \ball{\Tang{C}{y}}{\epsilon}.
\end{equation}
Indeed, if one assumes that
$$
    H_F((\bar p,\bar x);(p,v))\cap\left[\Y\backslash\ball{\Tang{C}{y}}{\epsilon}\right]
    \ne\varnothing,
$$
there must exists $\delta_C$ such that
$$
    H_F((\bar p,\bar x);(q,w))\cap\left[\Y\backslash\ball{\Tang{C}{y}}{\epsilon}\right]
    \ne\varnothing,\quad\forall (q,w)\in\ball{p}{\delta_C}\times\ball{v}{\delta_C},
$$
which contradicts inclusion $(\ref{in:pnvnNTCy})$, because $(p_n,v_n)\longrightarrow (p,v)$
as $n\to\infty$, so that $(p_n,v_n)\in\ball{p}{\delta_C}\times \ball{v}{\delta_C}$.
By arbitrariness of $\epsilon$, as $\Tang{C}{y}$ is a closed set,
inclusion $(\ref{in:pvTCy})$ entails
$$
   H_F((\bar p,\bar x);(p,v))\subseteq\Tang{C}{y},
$$
what amounts to say
$$
  v\in H_F^{+1}((\bar p,\bar x);(p,\cdot))(\Tang{C}{y}).
$$
Since this reasoning is valid for every $y\in F(\bar p,\bar x)$,
one achieves the inclusion $(\ref{in:outapproxsol})$ in the thesis.
\end{proof}

For the set $\displaystyle\bigcap_{y\in F(\bar p,\bar x)}
H_F^{+1}((\bar p,\bar x);(p,\cdot))(\Tang{C}{y})$ what has been said
in Remark \ref{rem:inapproxGderSolv}(ii) can be repeated.

\begin{remark}
Since it is $\Tang{C}{y}=\Y$ whenever $y\in\inte C$ and $H_F^{+1}((\bar p,\bar x);
(p,\cdot))(\Y)=\X$, it should be clear that formula $(\ref{in:outapproxsol})$
yields an useful outer approximation of $\Gder{\Solv}{\bar p,\bar x}$,
provided that $F(\bar p,\bar x)\not\subseteq\inte C$.
On the other hand, in the case $F(\bar p,\bar x)\subseteq\inte C$,
if $F$ is u.s.c. at $(\bar p,\bar x)$, there exists $\delta_0>0$ such that
$$
  F(p,x)\subseteq C,\quad\forall (p,x)\in\ball{\bar p}{\delta_0}
  \times \ball{\bar x}{\delta_0}.
$$
This means that
$$
  \ball{\bar x}{\delta_0}\subseteq\Solv(p),\quad\forall
  p\in\ball{\bar p}{\delta_0}
$$
and hence $\ball{\bar p}{\delta_0}\times \ball{\bar x}{\delta_0}\subseteq
\graph\Solv$. In other terms, $(\bar p,\bar x)\in\inte\graph\Solv$.
Consequently, it results in $\Tang{\graph\Solv}{(\bar p,\bar x)}=
\Pb\times\X$. According to the definition of graphical (contingent)
derivative, this fact yields
$$
  \Gder{\Solv}{\bar p,\bar x}(p)=\X,\quad\forall p\in\Pb.
$$
So, the circumstance $F(\bar p,\bar x)\subseteq\inte C$, under an upper
semicontinuity assumption, turns out to be of less interest.
\end{remark}

Following a similar technique as in \cite[Theorem 5.5.1]{BorZhu05},
the following estimate of the coderivative of $\Solv$ can be
established.

\begin{proposition}[Coderivative of $\Solv$ via merit function]
Let $(\SVI_p)$ be a parameterized set-valued inclusion and let $(\bar p,\bar x)
\in\Pb\times\X$. Suppose that:

(i) $\bar x\in\Solv(\bar p)$;

(ii) the set-valued mapping $p\leadsto F(p,\bar x)$ is Hausdorff $C$-u.s.c.
at $\bar p$;

(iii) there exists $\delta_1>0$ such that for every $p\in
\ball{\bar p}{\delta_1}$ each set-valued mapping $x\leadsto F(p,x)$
is l.s.c. on $\X$;

(iv) there exists $\delta_2>0$ such that, for every $p\in
\ball{\bar p}{\delta_2}$, each set-valued mapping $x\leadsto F(p,x)$
admits an outer prederivative $H_{F(p,\cdot)}(x;\cdot):\X\rightrightarrows\Y$
at each point $x\in \ball{\bar p}{\delta_2}$, such that $\sigma_H(\bar p,\bar x)>0$.

\noindent Then, there exist positive $\zeta$ and $\eta$ such that the following
representation holds
\begin{eqnarray}     \label{eq:CoderSolvrep}
  \Coder{\Solv}{p,x}(x^*)=\{p^*\in\Pb^*\ & | &  (p^*,-x^*)\in\cone\Fsubd
  \nu_{F,C}(p,x))\}, \\
   & & \forall (p,x)\in [\inte\ball{\bar p}{\zeta}\times
  \inte\ball{\bar x}{\eta}]\cap\graph\Solv.   \nonumber
\end{eqnarray}
\end{proposition}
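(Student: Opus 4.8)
The plan is to identify the Fr\'echet normal cone to $\graph\Solv$ at points of the graph near $(\bar p,\bar x)$ with the cone generated by the Fr\'echet subdifferential of the merit function $\nu_{F,C}$, and then to read off the coderivative formula from its very definition. First I would note that hypotheses (i)--(iv) are exactly those needed to invoke the first part of Proposition \ref{pro:vecterboAub}, which produces positive $\zeta$ and $\eta$ together with the error bound $(\ref{in:parerboFC})$ in the form
\begin{equation*}
  \dist{x}{\Solv(p)}\le{\nu_{F,C}(p,x)\over\sigma_H(\bar p,\bar x)},\qquad\forall (p,x)\in\ball{\bar p}{\zeta}\times\ball{\bar x}{\eta}.
\end{equation*}
Since $\dom F=\Pb\times\X$, one has $\graph\Solv=[\nu_{F,C}=0]$, so $\nu_{F,C}$ vanishes on the graph. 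Because $\Pb\times\X$ carries the max-norm, for $w\in\Solv(p)$ it holds $\|(p,x)-(p,w)\|=\|x-w\|$, whence $\dist{(p,x)}{\graph\Solv}\le\dist{x}{\Solv(p)}\le\nu_{F,C}(p,x)/\sigma_H(\bar p,\bar x)$ on the same product of balls. In the light of the definition of the Fr\'echet coderivative, the whole statement reduces to proving the equality $\Ncone{\graph\Solv}{(p,x)}=\cone\Fsubd\nu_{F,C}(p,x)$ for each $(p,x)\in[\inte\ball{\bar p}{\zeta}\times\inte\ball{\bar x}{\eta}]\cap\graph\Solv$, the openness of the balls ensuring that the error bound remains valid throughout a full neighbourhood of every such point.

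For the inclusion $\cone\Fsubd\nu_{F,C}(p,x)\subseteq\Ncone{\graph\Solv}{(p,x)}$ I would fix $w^*\in\Fsubd\nu_{F,C}(p,x)$ and use $\nu_{F,C}(p,x)=0$ in the liminf defining the Fr\'echet subdifferential. Restricting that liminf to the smaller set of points $(q,w)\in\graph\Solv$, where $\nu_{F,C}$ again vanishes, the difference quotient collapses to $-\langle w^*,(q,w)-(p,x)\rangle/\|(q,w)-(p,x)\|$, and the resulting inequality is precisely the limsup condition defining $w^*\in\Ncone{\graph\Solv}{(p,x)}$. Since the Fr\'echet normal cone is itself a cone, this yields the desired inclusion.

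The reverse inclusion is where the error bound does the real work, and it is the step I expect to be the main obstacle. Here I would use the representation $(\ref{eq:Fnconrep})$, which gives $\Ncone{\graph\Solv}{(p,x)}=\bigcup_{\kappa>0}\kappa\,\Fsubd\dist{\cdot}{\graph\Solv}(p,x)$, so that it suffices to show $\Fsubd\dist{\cdot}{\graph\Solv}(p,x)\subseteq\sigma_H(\bar p,\bar x)^{-1}\Fsubd\nu_{F,C}(p,x)$. Given $u^*$ in the former subdifferential, I would dominate the distance function by the merit function through the error bound: replacing $\dist{(q,w)}{\graph\Solv}$ by the larger quantity $\nu_{F,C}(q,w)/\sigma_H(\bar p,\bar x)$ inside the numerator only enlarges the quotient, so the liminf condition for $u^*$ transfers to a liminf condition certifying that $u^*\in\Fsubd(\nu_{F,C}/\sigma_H(\bar p,\bar x))(p,x)=\sigma_H(\bar p,\bar x)^{-1}\Fsubd\nu_{F,C}(p,x)$. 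Taking cones and recalling $\kappa>0$, every $w^*\in\Ncone{\graph\Solv}{(p,x)}$ then lies in $\cone\Fsubd\nu_{F,C}(p,x)$, completing the equality of the two cones.

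Finally, inserting this equality into $\Coder{\Solv}{p,x}(x^*)=\{p^*\in\Pb^*\ |\ (p^*,-x^*)\in\Ncone{\graph\Solv}{(p,x)}\}$ delivers the representation $(\ref{eq:CoderSolvrep})$ on $[\inte\ball{\bar p}{\zeta}\times\inte\ball{\bar x}{\eta}]\cap\graph\Solv$. The only genuinely delicate point is the domination step: it works precisely because $\sigma_H(\bar p,\bar x)>0$ guarantees a finite error-bound rate via Proposition \ref{pro:vecterboAub}; without it one could not pass from normals to $\graph\Solv$, which constrain behaviour only along the graph, to subgradients of $\nu_{F,C}$, which constrain behaviour in every direction. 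I would also keep an eye on the applicability of $(\ref{eq:Fnconrep})$ to $\graph\Solv$, checking that the local (closed-graph) structure near $(\bar p,\bar x)$ is enough to legitimate the distance-function representation there.
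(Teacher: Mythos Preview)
Your proposal is correct and follows the same overall strategy as the paper: invoke Proposition~\ref{pro:vecterboAub} for the local error bound, bound $\dist{(p,x)}{\graph\Solv}$ by $\dist{x}{\Solv(p)}\le\nu_{F,C}(p,x)/\sigma_H(\bar p,\bar x)$, and then identify $\Ncone{\graph\Solv}{(p,x)}$ with $\cone\Fsubd\nu_{F,C}(p,x)$ by proving the two inclusions separately.

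The one noteworthy difference is in the execution of the inclusion $\Ncone{\graph\Solv}{(p,x)}\subseteq\cone\Fsubd\nu_{F,C}(p,x)$. The paper, after writing $(p^*,-x^*)\in\kappa\,\Fsubd\dist{\cdot}{\graph\Solv}(p,x)$, invokes the smooth variational description of Fr\'echet subgradients (\cite[Theorem 1.88]{Mord06}) to obtain a $C^1$ minorant $\varphi$, combines it with the error bound to see that $(p,x)$ locally minimizes $-\varphi+(\kappa/\sigma_H)\nu_{F,C}$, and then applies the Fermat rule together with the sum rule. You bypass this detour entirely: from $\dist{\cdot}{\graph\Solv}\le\nu_{F,C}/\sigma_H$ locally with equality at $(p,x)$, the monotonicity of the Fr\'echet subdifferential (read directly from the defining $\liminf$) gives $\Fsubd\dist{\cdot}{\graph\Solv}(p,x)\subseteq\sigma_H^{-1}\Fsubd\nu_{F,C}(p,x)$ in one line. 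Your route is more elementary and avoids any appeal to smooth renorming or variational descriptions; the paper's route, on the other hand, mirrors the template of \cite[Theorem 5.5.1]{BorZhu05} more visibly. The easy inclusion is handled equivalently in both proofs (yours via restriction of the $\liminf$ to $\graph\Solv$, the paper's via $\kappa\,\nu_{F,C}\le\ind{\cdot}{\graph\Solv}$), these being two phrasings of the same monotonicity fact.
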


\begin{proof}
The above hypotheses ensure the validity of the first assertion in
Proposition \ref{pro:vecterboAub}. Thus, there exists positive $\zeta$ and $\eta$
such that
\begin{equation}     \label{in:erbovectSolvH}
  \dist{x}{\Solv(p)}\le{\nu_{F,C}(p,x)\over\sigma_H(\bar p,\bar x)},
    \quad\forall
    (p,x)\in\ball{\bar p}{\zeta}\times \ball{\bar x}{\eta}.
\end{equation}
Now, take an arbitrary pair $(p,x)\in [\inte\ball{\bar p}{\zeta}\times \inte
\ball{\bar x}{\eta}]\cap\graph\Solv$, $x^*\in\X^*$ and $p^*\in
\Coder{\Solv}{p,x}(x^*)$. According to the representation of a Fr\'echet
normal cone in terms of Fr\'echet subdifferential in $(\ref{eq:Fnconrep})$,
one has
$$
  (p^*,-x^*)\in\Ncone{\graph\Solv}{(p,x)}=\bigcup_{\kappa>0}\kappa\,
  \Fsubd\dist{\cdot}{\graph\Solv}(p,x).
$$
By applying a well-known variational description of Fr\'echet
subgradients (see, for instance, \cite[Theorem 1.88]{Mord06}),
the above inclusion means that, for some $\kappa>0$, there must exist
a function $\varphi\in C^1(\Pb\times\X)$ with $\Fder\varphi(p,x)=
(p^*,-x^*)$ and $\varphi(p,x)=\kappa\dist{(p,x)}{\graph\Solv}=0$,
such that
$$
  \varphi(q,z)\le\kappa\dist{(q,z)}{\graph\Solv},\quad\forall(q,z)
  \in\Pb\times\X.
$$
By combining the last inequality with inequality $(\ref{in:erbovectSolvH})$
and taking $r>0$ in such a way that $\ball{p}{r}\times\ball{x}{r}\subseteq
\ball{\bar p}{\zeta}\times\ball{\bar x}{\eta}$, one obtains
\begin{eqnarray*}
   \varphi(q,z) &\le & \varphi(p,x)+\kappa\dist{(q,z)}{\graph\Solv} \\
   &\le & \varphi(p,x)+\kappa\dist{z}{\Solv(q)}\le \varphi(p,x)+
   {\kappa\over\sigma_H(\bar p,\bar x)}\nu_{F,C}(q,z),   \\
   & & \quad\forall (q,z)\in \ball{p}{r}\times\ball{x}{r}.
\end{eqnarray*}
This inequality says that $(p,x)$ is a local minimizer of the function
$(q,z)\mapsto -\varphi(q,z)+ (\kappa/\sigma_H(\bar p,\bar x))\nu_{F,C}(q,z)$.
Consequently, by well-known calculus rules of the Fr\'echet subdifferential,
it must be
$$
  (p^*,-x^*)=\Fder\varphi(p,x)\in{\kappa\over\sigma_H(\bar p,\bar x)}
  \Fsubd\nu_{F,C}(p,x).
$$
Thus, the above argument proves the inclusion
$$
  \Coder{\Solv}{p,x}(x^*)\subseteq\{p^*\in\Pb^*\ |\  (p^*,-x^*)\in\cone\Fsubd
  \nu_{F,C}(p,x))\}.
$$
For proving the reverse inclusion it suffices to observe that for every
$\kappa>0$ it holds
$$
  \kappa\nu_{F,C}(q,z)\le\ind{(q,z)}{\graph\Solv},\quad\forall
  (q,z)\in\Pb\times\X,
$$
which, by known properties of the Fr\'echet subdifferential, implies
$$
  \kappa\Fsubd\nu_{F,C}(p,x)\subseteq\Fsubd\ind{\cdot}{\graph\Solv}
  (p,x)=\Ncone{\graph\Solv}{(p,x)},
$$
thereby completing the proof.
\end{proof}

\begin{remark}
Whenever $F$ is, in particular, $C$-concave in $\Pb\times\X$, function
$\nu_{F,C}$ turns out to be convex in $\Pb\times\X$. Indeed, for every
$(p_1,x_1),\, (p_2,x_2)\in\Pb\times\X$ and $t\in [0,1]$, by virtue of the
relations recalled in Remark \ref{rem:provectexc}(i) and (ii), it holds
\begin{eqnarray*}
    \nu_{F,C}(t(p_1,x_1)+(1-t)(p_2,x_2)) &=& \exc{F(t(p_1,x_1)+(1-t)(p_2,x_2))}{C} \\
    &\le& \exc{tF(t(p_1,x_1)+(1-t)F(p_2,x_2)+C}{C} \\
    &=& \exc{tF(p_1,x_1)+(1-t)F(p_2,x_2)}{C} \\
    &\le & t\exc{F(p_1,x_1)}{C}+(1-t)\exc{F(p_2,x_2)}{C}   \\
    &=& t\nu_{F,C}(p_1,x_1)+(1-t)\nu_{F,C}(p_2,x_2).
\end{eqnarray*}
Thus, in such an event the Fr\'echet subdifferential in formula
$(\ref{eq:CoderSolvrep})$ can be replaced with the subdifferential
in the sense of convex analysis.
\end{remark}


\section{Conclusions}   \label{Sect:5}

The findings exposed in Section \ref{Sect:3} and \ref{Sect:4} provide
some elements for a first-order analysis of multifunctions implicitly
defined by parameterized set-valued inclusions. These elements are
formulated in the language of modern variational analysis, speaking of
Lipschitzian properties and generalized derivatives. The methodology
employed for the main achievements is based on an error bound estimate,
which describes the metric behaviour of the solution mapping near points
of its graph, under proper infinitesimal conditions. Such an approach
leaves open some technical questions that could be matter for a future
deepening of the present research line. Among them, the following ones
are to be mentioned:

\begin{itemize}

\item[1.] Several results presented in Section \ref{Sect:4} invoke the
condition $\sigma_H(\bar p,\bar x)>0$. It would be useful to work out
this condition in relation to specific forms taken  by the outer
prederivative (e.g. fans and, in particular, those fans generated by
bundles of linear operators).

\item[2.] A general assumption on the set-valued inclusions considered
in Section \ref{Sect:4} is that $\inte C\ne\varnothing$. The author is
aware of the fact that this condition might be severe in the context of infinite-dimensional
spaces. It would be helpful therefore to devise surrogates of the condition $(\ref{eq:defsigmaH})$,
which allow one to avoid involving the topological interior of $C$.

\item[3.] In order to complete the analysis of $\Coder{\Solv}{p,x}$ with
a representation in terms of problem data, it would be proper to find how
to express $\Fsubd\nu_{F,C}(p,x)$ via the coderivative of $F$.

\end{itemize}

An impact evaluation of the main achievements on the treatment
of robust optimization problems deserves, of course, a dedicated analysis.

\vskip1cm


\end{document}